\newtheorem{thm}{Theorem}[section]
\newtheorem{lemma}[thm]{Lemma}
\newtheorem{prop}[thm]{Proposition}
\newtheorem{rem}[thm]{Remark}
\def\R{\mathbb{R}}
\newcommand{\ep}{\varepsilon}
\def\norma#1#2{\|#1\|_{\lower 4pt \hbox{$\scriptstyle #2$}}}
\def\e{\varepsilon}
\newcommand{\ratio}{\mathcal{G}}
\newcommand{\ratioo}{\mathcal{G}_0}
\newcommand{\uu}{\textsf{v}}
\begin{document}
        
\title[Quantitative isoperimetric inequality in the plane with barycentric distance]{On the quantitative isoperimetric inequality in the plane with the barycentric distance}
                
\author{Chiara Bianchini}
\author{Gisella Croce} 
\author{Antoine Henrot} 

\address{\emph{C. Bianchini:} (corresponding author)  Dipartimento di Matematica ed Informatica
``U.~Dini'',
        Universit\`a di Firenze, Viale Morgagni 67/A, 50134 Firenze, Italy.}
        \email{chiara.bianchini@unifi.it}

\address{\emph{G. Croce:} 
Normandie Univ, France; ULH, LMAH, F-76600 Le Havre; FR CNRS 3335, 25 rue
Philippe Lebon, 76600 Le Havre, France.}
\email{gisella.croce@univ-lehavre.fr}

\address{\emph{A. Henrot:} Institut \'Elie Cartan de Lorraine, Université de Lorraine, CNRS, IECL, F-54000 Nancy, France.}
\email{antoine.henrot@univ-lorraine.fr}

\subjclass{(2010) 28A75, 49J45, 49J53, 49Q10, 49Q20}
\keywords{Isoperimetric inequality, quantitative isoperimetric inequality,
isoperimetric deficit, 
barycentric asymmetry, shape derivative, optimality conditions.}


\begin{abstract}
In this paper we study a quantitative isoperimetric inequality in the plane, related to the isoperimetric deficit $\delta$ and the barycentric asymmetry $\lambda_0$.  
Our aim is to prove that there exists an absolute constant $C$ such that for every planar (convex or compact and connected) set $\Omega$ it holds: $\lambda_0^2(\Omega) \leq C\; \delta(\Omega)$. 
This
generalizes some results  obtained by B. Fuglede 1993 in \cite{Fu93Geometriae}.
For that purpose, we consider a shape optimization problem in which we minimize
 the ratio $\delta(\Omega)/\lambda_0^2(\Omega)$ both in the class of compact connected sets
 and in the class of convex sets.
%
%
\end{abstract}
\maketitle

\section{Introduction}
In the  last thirty years quantitative isoperimetric inequalities have received much attention in the litterature.
The purpose is to quantify the distance that a subset of $\R^n$, $\Omega$, has from an $n$-dimensional ball $B$ of the same measure in terms of the so called isoperimetric deficit $\delta(\Omega)$: \begin{equation}\label{def-delta}
\delta(\Omega)=\frac{P(\Omega)-P(B)}{P(B)}\,
\end{equation}
 (here and later $|\cdot|$ indicates the $n$-dimensional Lebesgue measure).
Several kinds of distances have been proposed to establish 
quantitative isoperimetric inequalities of the type 
\begin{equation}\label{distK}
[dist(\Omega,B)]^k\le C\; \delta(\Omega).
\end{equation}
In 1989, Fuglede \cite{Fu89Transactions} used the Hausdorff distance of a set $\Omega$ from the ball of same volume centered at the barycentre $x^G$ of $\Omega$.
We recall that the barycenter of a set $\Omega$ is  defined as
$\displaystyle
x^G=\frac{1}{|\Omega|}\int_{\Omega} x\,dx\,.
$
 He called it  the \emph{uniform spherical deviation}. 
He proved a series of inequalities for convex sets and \emph{nearly spherical 
sets}, that is, star-shaped sets with respect to their barycentre (which may be taken to be 0) written as
\begin{equation*}
\{y\in \R^n: y=tx(1+u(x)), x\in \mathbb{S}^{n-1}, t\in [0,1]\},
\end{equation*} 
where $u: \mathbb{S}^{n-1}\to \R$ is a positive Lipschitz function,  with 
 $\norma{u}{L^{\infty}}\leq \frac{3}{20n}$ and $\norma{\nabla u}{L^\infty}\leq \frac 12$. 
In \cite{FGP}, the authors proved the same kind of inequalities as (\ref{distK})  for a more general family of sets,  and considering as distance the minimum of 
the Hausdorff distances of a set from all the balls of $\R^n$ with fixed volume.

 [GIRARE la frase]   L. E. Fraenkel proposed a different kind of method to count the distance from a ball  (now called Fraenkel asymmetry $\lambda(\cdot)$), to enlarge the family of sets for which a quantitative isoperimetric inequality hold:
\begin{equation}\label{def-lambda}
\lambda(\Omega)=\inf_{y\in \R^n}\frac{|\Omega \Delta B_y|}{|\Omega|}\,
\end{equation}
where $B_y$ is the ball of center $y$ and such that $|B_y|=|\Omega|$ and $\Delta$ is the symmetric difference of sets.
This distance can be seen as an $L^1$ distance between $\Omega$ and any ball $B_y$, centered at $y\in \R^n$, with the same measure as $\Omega$. 
On the contrary, 
the Hausdorff distance is in some sense an $L^{\infty}$ distance between sets. 
 Many mathematicians studied quantitative isoperimetric inequalities with the Fraenkel asymmetry, establishing sharp inequalities (see for example \cite{HHW},  \cite{H}, \cite{DeG}, \cite{Ca}, \cite{AFN}, \cite{FiMP}, \cite{FuMP}, \cite{CiLe}, \cite{Fuscopreprint}, {\cite{DL}})
 and existence of an optimal set for the related shape optimization problem (see \cite{CiLeexistence} and \cite{BCH_COCV}).

In the spirit of the Fraenkel asymmetry, Fuglede  proposed in \cite{Fu93Geometriae} 
the barycentric asymmetry:
$$
\lambda_0(\Omega)=\frac{|\Omega \Delta B_{x^G}|}{|\Omega|}
$$
where $B_{x^G}$ is the ball centered at the barycentre ${x^G}$ of $\Omega$ and such that $|\Omega|=|B_{x^G}|$.  
Notice that $\lambda_0(\Omega)$ is obviously
easier to compute than $\lambda(\Omega)$.
Fuglede proved that there exists a positive constant (depending only on the dimension $n$) such that
\begin{equation}\label{Fuglede_convex}
\delta(\Omega)\geq C(n)\,\lambda_0^2(\Omega),\quad\text{for any convex subsets $\Omega$ of $\R^n$}.
\end{equation}

In this paper we propose two kinds of generalizations of Fuglede's results \cite{Fu93Geometriae}, in dimension $n=2$, by considering the shape optimization problem $\min \ratioo(\Omega)$, where 
\begin{equation}\label{ratioo}
\displaystyle \ratioo(\Omega)=\frac{\delta(\Omega)}{\lambda_0^2(\Omega)},
\end{equation} 
in two different families of sets.
\begin{enumerate}
\item
We prove that there exists a strictly positive constant $C$ such that  the
inequality $\ratioo(\Omega)\geq C$ holds for compact connected  sets (see Section \ref{section3}). Notice that, as already observed by Fuglede, the connectedness assumption is necessary in order to have a well posed problem (cf. Remark \ref{contro-esempioFuglede}). 
\item  
The existence of a minimizer of the shape functional $\ratioo$ is proved in the class of convex sets (see Section \ref{Section4}).
We also study the regularity  of the optimal set in Section \ref{Section5} and we write different kinds of optimality conditions.
\end{enumerate}

Some observations about the existence and the shape of an optimal set for the minimization of  $\ratioo$ in the plane are here presented.
\begin{itemize}
\item[-]
In this paper we do not prove the existence of an optimal set for the minimization of $\ratioo$
within the class of compact connected  sets (see Remark 
\ref{remarksection2aboutexistence}). However we formulate a conjecture about the shape of a possible optimal set.
\item[-]
We conjecture that the optimal set for $\ratioo$ within the class of convex sets is the stadium $S$, found in \cite{AFN} as the optimal set for the minimization of \begin{equation}\label{ratio}
\displaystyle\ratio{(\Omega)} := \frac{\delta(\Omega)}{\lambda^2(\Omega)};
\end{equation} 
in Section   \ref{Section5} we give some arguments in this direction.
\item[-]
In \cite{BCH_COCV} our aim was to compute the infimum of 
$\ratio{(\Omega)}$. We underline that, since $\lambda(\Omega)\leq
\lambda_0(\Omega)$, the infimum of 
$\ratioo$, would entail  an estimate from below of the infimum of 
$\ratio{(\Omega)}$. We note that for a given set $\Omega$, $\lambda_0(\Omega)$ is much simpler to compute than $\lambda(\Omega)$.
As observed by Fuglede \cite{Fu93Geometriae},  an estimate from below of
the infimum of 
$\ratio{(\Omega)}$ is given
in Lemma 2.1 of \cite{HHW}: one has $\ratio{(\Omega)}\geq
0.02$ for every $\Omega\subset \R^2$; see also \cite{FiMP} for an estimate
in any dimension. In \cite{LiCRAS} the authors found that $\ratio{(\Omega)}\geq
0.0625$ for every $\Omega$ in the plane.
However, better estimates should be possible. In \cite{CiLe}, \cite{BCH_COCV}  the conjectured optimal set for $\ratio{(\Omega)}$ is described:  a kind of {\it mask}
with two axes of symmetry and two optimal disks for the Fraenkel asymmetry, whose boundary is composed by arcs of circle with three different radii.  If this conjecture is proved, it would provide the following
sharp estimate: $\ratio{(\Omega)}\geq 0.3931$.
\end{itemize}
\section{Preliminaries}\label{sec2}
Given a subset $E$ of $\R^2$, we denote by $E^c$ its complementary set and by $co(E)$ its convex hull. $\mathcal{C}$ indicates the set of open convex bounded subsets of $\R^2$. For $\ep>0$ we denote by $E^{\varepsilon}$ the $\varepsilon-$enlargement of $E$, that is, 
$$
E^\ep=\{x\in \R^2: dist(x,E)\leq \varepsilon\}
$$ where $dist$ is the Euclidean distance. 
$Diam(E)$ denotes the diameter of $E$, that is
$$
Diam(E)=\sup_{x,y\in E} |x-y|.
$$

For $\Omega \subset \R^2$ {measurable} and bounded, $\delta(\Omega)$ indicates the isoperimetric deficit as defined in (\ref{def-delta}), where $P(\Omega)$ is the perimeter in the Minkowski sense, that is: 
$$
P(\Omega)= \lim\limits_{\varepsilon\to 0^+}\dfrac{|\Omega^{\varepsilon}| -  |\Omega|}{\varepsilon}\,.
$$
We will explain later in Remark \ref{contro-esempioFuglede} why  this notion of perimeter is adapted to our problem
and why the classical perimeter in the sense of De Giorgi, denoted by $P^{DG}(\Omega)$ in the sequel, is not suitable here (we refer to \cite{AFP, HP} for definitions).

While considering the Fraenkel asymmetry (\ref{def-lambda}) for a set $\Omega$, we refer to a ball $B_x$ as to \emph{an optimal ball} if $|B_x|=\Omega$ and $\lambda(\Omega)=|\Omega\Delta B_x|/|\Omega|$.

We are interested in investigating the shape functionals $\ratioo, \ratio$ defined in (\ref{ratioo}), (\ref{ratio}), respectively.
%
Let $D$ be a fixed closed disk, we indicate by $\mathcal{K}(D)$ the set of all compact connected
subsets of $D$, while $\mathcal{K}$ indicates the class of connected compact subsets of $\R^2$. 
We recall that the {\it Hausdorff distance}
between two sets $K_1$ and $K_2$ in $\mathcal K$ is defined
by 
$$
d_{\mathcal{H}}(K_1,K_2):= \max \left\{ \sup_{x\in K_1} dist(x,K_2), \sup_{x\in
K_2} dist(x,K_1) \right\}. 
$$

We recall the classical Blaschke's  Theorem (cfr. Theorem 2.2.3 in \cite{HP}):  
\begin{thm}\label{Blaschke}
        Let $\{K_n\}$ be a sequence in $\mathcal K(D)$.  Then
there exists a subsequence which converges in the Hausdorff metric to a set
$K\in \mathcal K(D)$.
\end{thm}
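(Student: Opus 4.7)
The plan is to produce a Hausdorff-convergent subsequence by a diagonal argument exploiting the compactness of $\overline{\Omega}$, and then verify separately that the limit is (i) nonempty, (ii) compact, and (iii) connected. The existence of a convergent subsequence with a \emph{closed} limit is the classical part; the delicate point is checking that connectedness passes to the Hausdorff limit.

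First I would fix a countable dense subset $\{x_j\}_{j\in\N}$ of the compact set $\overline{\Omega}$. For each $j$, the sequence $\{\mathrm{dist}(x_j,K_n)\}_{n\in\N}$ lies in $[0,\operatorname{diam}(\Omega)]$, hence admits a convergent subsequence. By a standard diagonal extraction I can produce a single subsequence, still denoted $\{K_{n_k}\}$, such that $\mathrm{dist}(x_j,K_{n_k})$ converges as $k\to\infty$ for every $j$. Next I define the candidate limit as
\[
K \;=\; \bigcap_{m\geq 1}\, \overline{\bigcup_{k\geq m} K_{n_k}}\,,
\]
equivalently the set of all points $y\in\overline{\Omega}$ that are limits of sequences $y_{n_k}\in K_{n_k}$ along some further subsequence. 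By construction $K$ is closed, and it is contained in the compact set $\overline{\Omega}$, so $K$ is compact.

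To verify Hausdorff convergence $K_{n_k}\to K$, I would check separately the two inclusions built into the Hausdorff distance. For $\sup_{x\in K}\mathrm{dist}(x,K_{n_k})\to 0$, use that every $y\in K$ is approximated by points in the $K_{n_k}$ by the very definition of $K$. For $\sup_{x\in K_{n_k}}\mathrm{dist}(x,K)\to 0$, argue by contradiction: if some $\eta>0$ persists along a subsequence, pick $x_{n_k}\in K_{n_k}$ with $\mathrm{dist}(x_{n_k},K)>\eta$; by compactness of $\overline{\Omega}$ extract $x_{n_k}\to x_\infty$, and the characterization of $K$ forces $x_\infty\in K$, contradiction. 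The density of $\{x_j\}$ together with the controlled convergence of $\mathrm{dist}(x_j,K_{n_k})$ is what makes both estimates uniform. At this stage $K$ might a priori be empty, but if it were empty then $\mathrm{dist}(x,K_{n_k})$ would tend to $\operatorname{diam}(\Omega)$ for every $x$, which contradicts picking any $x\in K_{n_k}$; thus $K\neq\emptyset$.

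The main obstacle is the connectedness of $K$. I would proceed by contradiction: suppose $K=A\sqcup B$ with $A,B$ nonempty compact and $\mathrm{dist}(A,B)=3d>0$. Consider the open $d$-neighborhoods $U=A^{d}$ and $V=B^{d}$; they are disjoint. Since $K_{n_k}\to K$ in Hausdorff distance, for $k$ large $K_{n_k}\subset U\cup V$, while $K_{n_k}$ meets both $U$ and $V$ because $A$ and $B$ are both approximable by points of $K_{n_k}$. This exhibits a separation of $K_{n_k}$ into two nonempty relatively open pieces, contradicting the connectedness of $K_{n_k}$. Hence $K$ is connected, so $K\in\mathcal{K}(\overline{\Omega})$, and the theorem is proved. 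The only step I expect to need some care is the last one, where the choice of the neighborhood radius (here $d$, using the gap $3d$) must be small enough to keep $U$ and $V$ disjoint yet large enough that the Hausdorff approximation forces $K_{n_k}$ inside $U\cup V$.
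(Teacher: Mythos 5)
Note first that the paper does not actually prove this statement: it is cited as classical, ``cfr.\ Theorem 2.2.3 in \cite{HP}'', so there is no in-paper proof to compare against. Judged on its own, your proof follows the standard route: diagonal extraction over the $1$-Lipschitz distance functions $x\mapsto\mathrm{dist}(x,K_n)$ (equivalently, the Arzel\`a--Ascoli argument applied to the isometric embedding $K\mapsto\mathrm{dist}(\cdot,K)$ into $C(\overline\Omega)$), identification of the limit $K$ as the Kuratowski upper limit, and a disjoint-neighborhood separation argument for connectedness. The overall structure is correct, and the connectedness step, which is what the space $\mathcal K(\overline\Omega)$ of \emph{connected} compacts adds to the bare Blaschke selection theorem, is handled cleanly: splitting $K=A\sqcup B$ at distance $3d$, showing $K_{n_k}\subset A^{d}\cup B^{d}$ with both pieces nonempty for $k$ large is exactly right.

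One step, however, is too quick as written. The claim that $\sup_{x\in K}\mathrm{dist}(x,K_{n_k})\to 0$ follows ``by the very definition of $K$'' is not correct: the Kuratowski characterization only ensures that each $y\in K$ is approximated by points of $K_{n_{k_j}}$ along a \emph{further} subsequence, so it gives $\liminf_k\mathrm{dist}(y,K_{n_k})=0$ but not, a priori, $\mathrm{dist}(y,K_{n_k})\to 0$, let alone uniformly. What rescues the argument is precisely the diagonal extraction combined with equicontinuity: since the maps $x\mapsto\mathrm{dist}(x,K_{n_k})$ are uniformly $1$-Lipschitz and converge pointwise on the dense set $\{x_j\}$, they converge uniformly on $\overline\Omega$ to a $1$-Lipschitz limit $f$; one then checks $f=\mathrm{dist}(\cdot,K)$, and the uniform convergence instantly gives $\sup_{x\in K}\mathrm{dist}(x,K_{n_k})=\sup_{x\in K}|\mathrm{dist}(x,K_{n_k})-f(x)|\to 0$. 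You gesture at this (``the density of $\{x_j\}$ together with the controlled convergence\dots makes both estimates uniform'') but this Lipschitz-plus-density step is the crux and should be spelled out, not left implicit. A smaller point: the nonemptiness argument is phrased awkwardly -- for nonempty $K_{n_k}$, the quantity $\mathrm{dist}(x,K_{n_k})$ certainly does not tend to $\operatorname{diam}(\Omega)$; the clean argument is to pick $x_{n_k}\in K_{n_k}$, extract a convergent subsubsequence $x_{n_{k_j}}\to x_\infty$, and observe that $x_\infty\in K$ by your own characterization of $K$.
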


\begin{thm}\label{hausdorffconvex}
        Let $\{K_n\}$ be a sequence of compact convex sets converging  in the Hausdorff metric to a set
$K$. Then $K$ is compact and convex.
\end{thm}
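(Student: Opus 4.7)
The plan is to verify the two properties separately, using only the definition of Hausdorff distance, the convexity of the approximating sets, and the fact that the Hausdorff limit of closed sets is closed. The main work lies in a small ``point-passing'' lemma: if $z_n \in K_n$ and $z_n \to z$ while $K_n \to K$ in the Hausdorff metric, then $z \in K$. Once this is established, convexity transfers essentially for free.

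For compactness, I would first observe that the Hausdorff convergent sequence $\{K_n\}$ is uniformly bounded: for $n$ large, $K_n \subset K^{\varepsilon}$ for some fixed $\varepsilon>0$, and $K$ itself is bounded since otherwise $\sup_{x\in K}\mathrm{dist}(x,K_n)$ could not tend to zero. Hence $K$ lies in a bounded set. Closedness of $K$ is standard: if $p_k \to p$ with $p_k \in K$, pick $p_{k,n}\in K_n$ with $|p_k-p_{k,n}|\le \sup_{x\in K}\mathrm{dist}(x,K_n) + 1/k$; a diagonal argument and the triangle inequality give $\mathrm{dist}(p,K_n)\to 0$, so $p$ is a limit of a sequence whose distance to $K$ (via the Hausdorff bound on the other direction) vanishes. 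In fact the cleanest argument is direct: for any $p\in\overline{K}$ and any $\eta>0$, choose $n$ with $d_{\mathcal H}(K_n,K)<\eta/2$ and $q\in K$ with $|p-q|<\eta/2$; then $\mathrm{dist}(p,K_n)<\eta$, which forces $p$ to be arbitrarily close to $K_n$, hence to $K$. So $K$ is closed and bounded, hence compact.

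For the point-passing lemma, suppose $z_n\in K_n$ with $z_n\to z$. By Hausdorff convergence, $\sup_{x\in K_n}\mathrm{dist}(x,K)\to 0$, so $\mathrm{dist}(z_n,K)\to 0$. Then
\begin{equation*}
\mathrm{dist}(z,K) \le |z-z_n| + \mathrm{dist}(z_n,K)\to 0,
\end{equation*}
and since $K$ is closed, $z\in K$.

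Now for convexity, pick $x,y\in K$ and $t\in [0,1]$. Since $\sup_{x\in K}\mathrm{dist}(x,K_n)\to 0$, I can choose $x_n,y_n\in K_n$ with $x_n\to x$ and $y_n\to y$. By convexity of each $K_n$, the point $z_n:=(1-t)x_n+ty_n$ lies in $K_n$, and $z_n\to z:=(1-t)x+ty$. Applying the point-passing lemma yields $z\in K$, so $K$ is convex.

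There is no real obstacle here; the only subtlety is being careful to use both inequalities in the definition of $d_{\mathcal H}$ — one direction gives closedness/boundedness of the limit, and the other gives the existence of the approximating sequences $x_n,y_n\in K_n$ needed to propagate convexity to the limit.
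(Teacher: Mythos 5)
The paper states this result without giving a proof: like Blaschke's theorem just above it, it is treated as a classical fact and no argument is supplied. So there is no ``paper's own proof'' to compare against; I can only assess your argument on its own terms.

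Your argument is essentially correct and is the standard one. The point-passing lemma (if $z_n\in K_n$, $z_n\to z$, and $K_n\to K$ in Hausdorff, then $z\in K$) combined with the observation that $\sup_{x\in K}\mathrm{dist}(x,K_n)\to 0$ lets you build approximating sequences $x_n,y_n\in K_n$ for any $x,y\in K$, push convexity of each $K_n$ onto $(1-t)x_n+ty_n$, and pass to the limit. That is exactly how one proves convexity is stable under Hausdorff limits. Boundedness of $K$ is also argued correctly: an unbounded $K$ would make $\sup_{x\in K}\mathrm{dist}(x,K_n)$ infinite for any compact $K_n$.

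One small remark on closedness: the Hausdorff ``distance'' cannot distinguish $K$ from $\overline K$, so the passage you call the ``cleanest argument'' (take $p\in\overline K$, show $\mathrm{dist}(p,K)$ is arbitrarily small) only recovers $p\in\overline K$, which is where you started; it does not by itself establish $p\in K$. The honest statement is that the Hausdorff metric is a genuine metric on the space of \emph{compact} sets, and the limit $K$ in that space is unique and automatically compact, so the substantive content of the theorem is convexity, which your point-passing argument handles cleanly. Your first, diagonal-type argument for closedness is sound modulo this convention; the second one is circular as written. Neither affects the validity of the convexity part, which is the real point.
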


We will also use the following semicontinuity result, analogous to the Golab Theorem for the Minkowski perimeter
in the plane, proved by Henrot and Zucco in \cite{HZ}:

\begin{thm}\label{Henrot-Zucco}
 Let $\{K_n\}\subset \R^2$ be a sequence contained in $\mathcal{K}(D)$
converging to a set $K \in \mathcal{K}(D)$ in the Hausdorff metric. 
Then 
\[
P(K)\leq \liminf_{n\to \infty} P(K_n).
\]
\end{thm}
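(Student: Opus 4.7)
The plan is to derive the semicontinuity directly from the Minkowski definition of $P$, exploiting the way Hausdorff convergence propagates to the $\varepsilon$-enlargements. Set $\eta_n:=d_{\mathcal{H}}(K_n,K)\to 0$. From the defining inclusions $K\subseteq K_n^{\eta_n}$ and $K_n\subseteq K^{\eta_n}$ one deduces, for every $\alpha>0$, $K^{\alpha}\subseteq K_n^{\alpha+\eta_n}$ and $K_n^{\alpha}\subseteq K^{\alpha+\eta_n}$. Passing to Lebesgue measures, since the monotone function $\alpha\mapsto|K^\alpha|$ is continuous outside an at most countable set, one obtains, for a.e.\ fixed $\varepsilon>0$, $|K_n^\varepsilon|\to|K^\varepsilon|$ as $n\to\infty$; moreover $\limsup_n|K_n|\leq|K|$, since $K_n\subseteq K^{\eta_n}$ and $|K^{\eta_n}|\downarrow|K|$ by monotone convergence as $\eta_n\downarrow 0$.

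Next, I would fix $\varepsilon>0$ at a continuity point of $\alpha\mapsto|K^\alpha|$ and combine the two limits above to produce
$$\frac{|K^\varepsilon|-|K|}{\varepsilon}\leq \liminf_{n\to\infty}\frac{|K_n^\varepsilon|-|K_n|}{\varepsilon}.$$
If the difference quotient $(|K_n^\varepsilon|-|K_n|)/\varepsilon$ were monotone in $\varepsilon$, as it is in the convex case by the Steiner formula, one could send $\varepsilon\to 0^+$ on both sides and conclude. For general compact connected sets this monotonicity is not available, and this is the main obstacle: one has to exchange the limit $\varepsilon\to 0$ (which on the right-hand side produces $P(K_n)$) with the $\liminf_n$.

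To bypass the difficulty, I would invoke the geometric description of the Minkowski perimeter in the plane for compact connected sets, namely $P(K)=\mathcal{H}^1(\partial K)$ when $|K|>0$ and $P(K)=2\mathcal{H}^1(K)$ when $|K|=0$, together with the classical Golab theorem on the lower semicontinuity of $\mathcal{H}^1$ along Hausdorff-converging sequences of continua. A case analysis on whether the limit $K$ is degenerate or not, carefully tracking the factor $2$ that arises from the two-sided accumulation of Lebesgue measure of $K_n$ around a thin limit, allows one to bound $P(K_n)$ from below by a quantity whose $\liminf$ dominates $P(K)$. Combined with the $\varepsilon$-enlargement estimate of the previous paragraph, this yields the desired inequality $P(K)\leq\liminf_{n\to\infty}P(K_n)$.
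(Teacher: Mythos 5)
The paper does not supply a proof of this result --- it is quoted from Henrot and Zucco \cite{HZ} --- so your attempt can only be judged on its own merits. The first half of your argument is sound: the inclusions $K^{\alpha}\subseteq K_n^{\alpha+\eta_n}$ and $K_n^{\alpha}\subseteq K^{\alpha+\eta_n}$ follow from $\eta_n=d_{\mathcal H}(K_n,K)$, and combined with Proposition \ref{prophp}(1) they give $\limsup_n|K_n|\le|K|$ and $|K_n^\varepsilon|\to|K^\varepsilon|$ at every continuity point of $\alpha\mapsto|K^\alpha|$. You also correctly identify why this does not close the argument: the difference quotient $\varepsilon\mapsto(|K_n^\varepsilon|-|K_n|)/\varepsilon$ is not monotone for a general continuum, so the limit in $\varepsilon$ and the $\liminf$ in $n$ cannot be exchanged.

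The second half, however, relies on a formula that is simply false. For a compact connected $K\subset\R^2$ with $|K|>0$ it is \emph{not} true that $P(K)=\mathcal H^1(\partial K)$, and the dumbbell $D$ of Remark \ref{dumbbell} in this very paper is a counterexample: $|D|=\pi>0$, yet, as the paper observes, its Minkowski perimeter counts the connecting segment twice, while $\mathcal H^1(\partial D)$ counts it only once. The dichotomy ``$|K|=0$ versus $|K|>0$'' does not separate one-sided from two-sided boundary: a set of positive measure can carry filaments, and any geometric representation of the Minkowski perimeter of a continuum must treat the part of $\partial K$ bounding $\mathrm{int}\,K$ and the thin part $K\setminus\overline{\mathrm{int}\,K}$ separately, the latter weighted by a factor $2$. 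Your case analysis, keyed only to whether $|K|$ vanishes, cannot reproduce that weight. Finally, even granting a corrected formula, an appeal to Golab's theorem would require Hausdorff convergence of the relevant one-dimensional sets (for instance $\partial K_n$ to $\partial K$), which does not follow from $K_n\to K$. Both difficulties --- the right decomposition of $P$ and the convergence of the right pieces --- are exactly what the Henrot--Zucco argument has to, and does, address; your sketch leaves them open.
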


We will also use the following consequences of the Hausdorff convergence of sets (see Proposition 2.2.21 of  \cite{HP}). Here $\chi_K$ denotes the characteristic function of a set $K$.

\begin{prop}\label{prophp} 
Let $K_n, K$ in $\mathcal{K}(D)$. If $K_n\to K$ in the Hausdorff metric, then
\begin{enumerate}
\item
$|K_n
\setminus K|\to 0$
\item
$\chi_{K}\geq \limsup_{n\to \infty}\chi_{K_n}$ a.e.
\item
If $\chi_{K_n}\to \chi$ in $L^1(\Omega)$ (or even weak-star in $(L^1,L^\infty)$), then $\chi\leq \chi_K$.
\end{enumerate}
\end{prop}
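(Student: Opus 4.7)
The plan is to exploit the defining property of Hausdorff convergence: for every $\varepsilon > 0$ there exists $N_\varepsilon$ such that $K_n \subset K^{\varepsilon}$ for all $n \geq N_\varepsilon$. From this single inclusion, together with continuity of Lebesgue measure, each of the three claims will follow in a short step.

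For (1) I would combine the inclusion with continuity from above of Lebesgue measure on a decreasing family of sets of finite measure. Since $K$ is compact and thus closed, $K = \bigcap_{\varepsilon>0} K^{\varepsilon}$, and $|K^{1}| < +\infty$ because $K \subset \overline{\Omega}$ is bounded. Hence $|K^{\varepsilon} \setminus K| \to 0$ as $\varepsilon \downarrow 0$. Given $\eta>0$, first choose $\varepsilon$ so that $|K^{\varepsilon}\setminus K|<\eta$, then choose $n$ so large that $K_n \subset K^{\varepsilon}$; this yields $|K_n\setminus K|\leq |K^{\varepsilon}\setminus K|<\eta$, proving (1).

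For (2) I would argue pointwise. If $x\in K$ the inequality $\chi_K(x)=1 \geq \chi_{K_n}(x)$ is trivial. If $x\notin K$, then $\delta := d(x,K) > 0$ because $K$ is closed; for $n$ large enough $K_n \subset K^{\delta/2}$, hence $x\notin K_n$, so $\chi_{K_n}(x)=0$ eventually and $\limsup_n \chi_{K_n}(x) = 0 = \chi_K(x)$. In fact the stated inequality holds everywhere, not only almost everywhere.

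For (3) I would apply a reverse Fatou argument, made legitimate by the fact that all $\chi_{K_n}$ are dominated by the integrable function $\chi_{\Omega}$. For any nonnegative $\varphi \in L^{\infty}(\Omega)$, reverse Fatou combined with (2) gives
$$
\limsup_{n\to\infty} \int_{\Omega}\chi_{K_n}\varphi\,dx \;\leq\; \int_{\Omega}\limsup_{n\to\infty}\chi_{K_n}\,\varphi\,dx \;\leq\; \int_{\Omega}\chi_K\,\varphi\,dx.
$$
On the other hand, since $\Omega$ is bounded, any such $\varphi$ also lies in $L^{1}(\Omega)$, so the assumed weak-$\star$ convergence of $\chi_{K_n}$ to $\chi$ yields $\int_{\Omega}\chi_{K_n}\varphi\,dx \to \int_{\Omega}\chi\,\varphi\,dx$. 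Combining the two inequalities, $\int_{\Omega}\chi\,\varphi\,dx \leq \int_{\Omega}\chi_K\,\varphi\,dx$ for every nonnegative test function $\varphi$, whence $\chi\leq \chi_K$ almost everywhere.

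No step presents a real obstacle; the only subtleties are the continuity-of-measure argument in (1), which relies on the compactness of $K$ and the boundedness of $\overline{\Omega}$, and the application of the reverse Fatou lemma in (3), which requires the uniform domination by $\chi_{\Omega}$ provided by the same boundedness assumption.
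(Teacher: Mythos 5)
Your proof is correct. Note that the paper does not give its own proof of this proposition: it cites Proposition~2.2.21 of~\cite{HP} and records it as a black box, so there is no author argument to compare against. Your elementary argument based on the single inclusion $K_n\subset K^{\varepsilon}$ for $n$ large --- continuity from above of the measure for~(1), a pointwise case split on $x\in K$ versus $x\notin K$ for~(2), and a reverse-Fatou step dominated by $\chi_{\Omega}$ together with the embedding $L^{\infty}(\Omega)\subset L^{1}(\Omega)$ for~(3) --- is the natural self-contained route, and all the hypotheses you invoke (compactness of $K$ giving $K=\bigcap_{\varepsilon>0}K^{\varepsilon}$, boundedness of $\overline{\Omega}$ giving an integrable dominating function) are indeed available in the stated setting. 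A marginal simplification for~(3): in the $L^{1}$ case one can pass to an a.e.\ convergent subsequence and apply~(2) directly; your reverse-Fatou argument has the merit of covering the $L^{1}$ and weak-$\star$ cases in a single computation.
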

We also recall a compactness result about the $L^1$ convergence of sets, that is, the $L^1$ convergence of characteristic functions of sets. See \cite{HP} for the proof.  
\begin{prop}\label{cptL^1}
Let $K_n$ be a sequence of sets contained in an open set with finite measure, such that $P^{DG}(K_n)+|K_n|$ is uniformly bounded. 
Then there exists a set $K$ such that $\chi_{K_n}\to \chi_K$ in $L^1$, up to a subsequence.
\end{prop}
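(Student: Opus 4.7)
The plan is to view each set $K_n$ through its characteristic function $\chi_{K_n}$ and apply the standard compactness theorem for functions of bounded variation. Since by definition of the De Giorgi perimeter one has $\|\chi_{K_n}\|_{BV} = |K_n| + P^{DG}(K_n)$, the hypothesis gives that $\{\chi_{K_n}\}$ is a uniformly bounded sequence in $BV(\R^2)$. Moreover, all the $\chi_{K_n}$ are supported in a fixed open set $U$ of finite measure, so the problem reduces to compactness in $BV(U)$.

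By the classical compactness embedding of $BV$ into $L^1$ on bounded sets of finite perimeter (see e.g.\ the standard references on $BV$ functions), there is a subsequence, still denoted $\chi_{K_n}$, and a function $f \in L^1(U)$ with $\chi_{K_n} \to f$ in $L^1(U)$. Extracting a further subsequence yields a.e.\ pointwise convergence. Since each $\chi_{K_n}$ takes only the values $0$ and $1$, the pointwise limit $f$ takes only the values $0$ and $1$ almost everywhere, hence $f = \chi_K$ for the measurable set $K := \{x \in U : f(x) = 1\}$. This gives $K_n \to K$ in $L^1$ along the subsequence, which is the claim.

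I do not expect a genuine obstacle here: the statement is exactly the standard $BV$-compactness theorem reformulated in terms of sets, and the only point requiring a brief justification is that the $L^1$ limit of characteristic functions is itself a characteristic function, which follows from a.e.\ convergence along a further subsequence. One could alternatively cite this result directly from a textbook on sets of finite perimeter, but the two-line argument through the a.e.\ subsequence is essentially all there is to say.
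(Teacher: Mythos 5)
Your argument is correct and is exactly the standard proof of this result; the paper itself gives no proof but only refers to the book \cite{HP}, where the same $BV$-compactness argument (uniform bound on $\|\chi_{K_n}\|_{BV}$, Rellich-type embedding of $BV$ into $L^1$, and a.e.\ convergence along a further subsequence to identify the limit as a characteristic function) is what is meant. Nothing to add.
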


\begin{rem}\label{DeGiorgi-Minkowski}
The perimeters $P^{DG}$ and $P$ of a set satisfy the inequality 
$P^{DG}(K)\leq P(K)$ if $K\subset \R^2$ is a compact connected set, as remarked in \cite{HZ}.
\end{rem}
In   \cite{BCH_COCV} we found the infimum value of $\mathcal{G}$ for sequences converging to a ball. We used the notion of De Giorgi perimeter  to define the isoperimetric deficit. In view of
the above relation between the two notions of perimeters $P^{DG}, P$ one has the following result.
\begin{thm}\label{thmBCH}
Let $\{\Omega_{\varepsilon} \}_{\varepsilon>0}$ be a sequence of planar sets converging
to a ball $B$ in the sense that  
$|B\Delta \Omega_{\varepsilon} |\to 0$ as $\varepsilon\to 0$.
Then
$$
\inf
\left\{
\liminf_{\varepsilon\to 0}\frac{\delta(\Omega_{\varepsilon})}{\lambda^2(\Omega_{\varepsilon})}
\right\} 
\geq 
\frac{\pi}{8(4-\pi)}.
$$
\end{thm}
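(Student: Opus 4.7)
The plan is to prove the theorem in two matching halves: construct a sequence of planar sets $\Omega_\varepsilon\to B$ realising the value $\pi/(8(4-\pi))$ in the limit, and establish the same bound as a universal lower bound. Both parts reduce, after linearisation around the ball, to the same scale-invariant variational problem.

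I begin by reducing to nearly spherical sets. If $\liminf_{\varepsilon\to 0}\delta(\Omega_\varepsilon)/\lambda^2(\Omega_\varepsilon)$ is finite then $\delta(\Omega_\varepsilon)\to 0$; a planar quantitative isoperimetric inequality together with Theorem~\ref{Henrot-Zucco} (to handle the Minkowski perimeter) then forces Hausdorff convergence of $\Omega_\varepsilon$ to a translate of $B$. After translating and dilating so that $|\Omega_\varepsilon|=|B|=\pi$ I parametrise $\partial\Omega_\varepsilon$ as a polar graph $r=1+u_\varepsilon(\theta)$ with $u_\varepsilon\to 0$ uniformly, and Fuglede-type expansions give
\begin{equation*}
\delta(\Omega_\varepsilon)=\frac{1}{4\pi}\!\int_0^{2\pi}\!\bigl(u_\varepsilon'^2-u_\varepsilon^2\bigr)d\theta+o(\|u_\varepsilon\|_{H^1}^2),\qquad
\lambda(\Omega_\varepsilon)=\frac{1}{\pi}\min_{y\in\mathbb{R}^2}\!\int_0^{2\pi}\!|u_\varepsilon-y_1\cos\theta-y_2\sin\theta|\,d\theta+o(\|u_\varepsilon\|_{L^1}).
\end{equation*}
The $k=1$ Fourier modes of $u_\varepsilon$ contribute nothing to the leading term of $\delta$ and are simultaneously absorbed into $y$ in the expression for $\lambda$, so the limit problem is the minimisation of $F(u)/G(u)^2$ over functions $u$ with $\int u\cos\theta=\int u\sin\theta=0$, where $F$ and $G$ denote the two dominant functionals above.

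For the lower bound I use $L^\infty$--$L^1$ duality to write $\pi G(u)=\sup\{\int uh:\|h\|_\infty\le 1,\,\int h\cos\theta=\int h\sin\theta=0\}$. Cauchy--Schwarz applied to the Fourier series of $u$ and $h$ on the modes $k\ge 2$ yields $(\int uh)^2\le 4\pi^2 F(u)\,C^*$, where $C^*$ is the supremum of $\sum_{k\ge 2}(a_k(h)^2+b_k(h)^2)/(k^2-1)$ over admissible $h$. Since this functional is convex, the supremum is attained at an extreme point and hence at a bang-bang $h$; the rotational symmetry of the problem then pins $h^*(\theta)=\mathrm{sgn}(\cos 2\theta)$, whose Fourier series combined with Euler's $\sum_{m\ge 0}(2m+1)^{-2}=\pi^2/8$ and Leibniz's $\sum_{m\ge 0}(-1)^m/(2m+1)=\pi/4$ gives $C^*=8/\pi-2=2(4-\pi)/\pi$, so $F(u)/G(u)^2\ge \pi/(8(4-\pi))$.

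Equality in Cauchy--Schwarz forces $(k^2-1)a_k(u)\propto a_k(h^*)$, i.e.\ $u''+u=-c\,\mathrm{sgn}(\cos 2\theta)$ together with the orthogonality constraints. The unique admissible solution (up to scale) is the $\pi$-periodic, piecewise-trigonometric function
\begin{equation*}
v^*(\theta)=\begin{cases}\sqrt{2}\cos\theta-1,&|\theta|<\pi/4,\\ 1-\sqrt{2}\sin\theta,&\pi/4<\theta<3\pi/4,\end{cases}
\end{equation*}
with both $v^*$ and $v^{*\,\prime}$ continuous at the nodes; the sequence $\Omega_\varepsilon=\{r\le R_\varepsilon(1+\varepsilon v^*(\theta))\}$, with $R_\varepsilon$ preserving the area, then realises $\delta/\lambda^2\to\pi/(8(4-\pi))$, and this together with the lower bound completes the proof. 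The main obstacle will be the reduction to nearly spherical sets: the hypothesis $|B\Delta\Omega_\varepsilon|\to 0$ gives no a priori boundary regularity, so one must exploit the Minkowski-perimeter version of the quantitative isoperimetric inequality to upgrade $L^1$-convergence to a uniform graph parametrisation before Fuglede's expansion becomes available.
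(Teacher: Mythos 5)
Note first that the paper does not prove this theorem: it is quoted as Theorem~\ref{thmBCH} and attributed to \cite{BCH_COCV}, so there is no internal proof to compare against. I will therefore assess your proposal on its own merits. The overall architecture (Fuglede expansion to second order, pass to the Fourier-side linearised problem, realise the value with an explicit nearly-spherical sequence, prove the matching lower bound by duality and Cauchy--Schwarz) is sensible, and your arithmetic is correct: your $v^*$ solves $v''+v=-\mathrm{sgn}(\cos 2\theta)$, satisfies $\mathrm{sgn}(v^*)=\mathrm{sgn}(\cos 2\theta)$, and the value $C(h^*)=\frac{16}{\pi^2}\sum_{m\ge 0}\tfrac{1}{(2m+1)^2(4m+1)(4m+3)}=\tfrac{2(4-\pi)}{\pi}$ does come out via partial fractions and the Euler/Leibniz sums you cite. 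Nevertheless there are two genuine gaps.

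First, the reduction to nearly spherical sets does not work as you describe. You invoke ``a planar quantitative isoperimetric inequality together with Theorem~\ref{Henrot-Zucco}'' to upgrade $|B\Delta\Omega_\ep|\to 0$ and $\delta(\Omega_\ep)\to 0$ to Hausdorff convergence and then to a polar graph parametrisation. But neither inequality yields this: a quantitative isoperimetric inequality bounds $\lambda$ by $\delta$, not the Hausdorff distance, and Theorem~\ref{Henrot-Zucco} is a lower semicontinuity statement that presupposes Hausdorff convergence rather than producing it. In fact the implication fails outright: take $\Omega_\ep$ to be the union of a disc of radius $r_\ep=\sqrt{1-\rho_\ep^2}$ centred at $0$ and a disc of radius $\rho_\ep\to 0$ centred at a point $x_\ep$ with $|x_\ep|\to\infty$; then $|B\Delta\Omega_\ep|=2\pi\rho_\ep^2\to 0$ and $\delta(\Omega_\ep)\approx\rho_\ep\to 0$, yet the Hausdorff distance to $B$ diverges and $\partial\Omega_\ep$ is nowhere a single-valued polar graph. (This particular sequence is harmless for the lower bound since its ratio blows up, but it shows the reduction is not automatic.) What is actually needed here is a selection/replacement argument in the spirit of Cicalese--Leonardi \cite{CiLe,CiLeexistence}, adapted to the Minkowski perimeter, which substitutes a competitor with smaller or equal ratio that \emph{is} nearly spherical; your proof, as written, has no such step.

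Second, the identification of the optimal dual test function $h^*=\mathrm{sgn}(\cos 2\theta)$ is asserted, not proved. From your Cauchy--Schwarz step the lower bound requires an \emph{upper} estimate $C^*:=\sup\{C(h):\|h\|_\infty\le 1,\ h\perp 1,\cos,\sin\}\le \tfrac{2(4-\pi)}{\pi}$; computing $C(h^*)$ only gives $C^*\ge\tfrac{2(4-\pi)}{\pi}$, which is the useless direction. ``The supremum is attained at an extreme point, hence a bang-bang $h$'' is fine (after checking weak-$*$ continuity of $C$), but ``rotational symmetry then pins $h^*=\mathrm{sgn}(\cos 2\theta)$'' does not follow: rotational invariance of the functional only says the set of maximisers is rotation-invariant, and the bang-bang class respecting the moment constraints is infinite-dimensional. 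Establishing that $\mathrm{sgn}(\cos 2\theta)$ is maximal requires an actual argument --- a Riesz-type rearrangement inequality applied to the kernel $K(x)=\sum_{k\ge 2}\tfrac{\cos kx}{k^2-1}$ is the natural tool, very much in the spirit of the symmetrisation argument the present paper uses for Theorem~\ref{proposition042}, and it is not a one-line symmetry observation. Until both gaps are filled, you have a correct construction for the upper bound and a plausible but unsubstantiated strategy for the lower bound.
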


We will use the following result about the minimization of $\ratio$ 
within the class convex sets, proved in \cite{AFN}.

\begin{thm}\label{AFNthm}
There exists an optimal set for the minimization problem
$\displaystyle
\inf_{K\in\mathcal{C}} \ratio(K)$,
where $\mathcal{C}$ is the family of convex planar sets.
The infimum is attained by an explicitely described stadium $S$ and 
$$
\min\limits_{K\in\mathcal{C}} \ratio(K)=\ratio(S) \approx 0.406.
$$
\end{thm}

\begin{rem}\label{dumbbell}
In the sequel we will use  the set $D$ given by two balls, each one of area $\frac{\pi}{2}$, connected by a segment of length 2, whose direction passes through their centers. We will call it dumbbell. We observe that its Minkowski perimeter counts twice the length of
the segment and therefore
$$
\ratioo(D)=\frac{\delta(D)}{\lambda^2_0(D)}=
\frac{\sqrt{2}-1}{4}+\frac{1}{2\pi}\approx 0.26 <\frac{\delta(S)}{\lambda_0^2(S)}=\ratioo(S)\approx 0.406\,,
$$ 
where $S$ is the stadium of the above theorem.
\end{rem}

In the following we will use nearly spherical sets, studied by Fuglede in \cite{Fu89Transactions}, that is star-shaped sets $E$ parametrized as  
\begin{equation}\label{nearly-spherical}
E=\{y\in \R^2: y=tx(1+u(x)), x\in \mathbb{S}^1, t\in [0,1]\},
\end{equation}
with $u: \mathbb{S}^1\to (0,+\infty)$ a Lipschitz function. 
Moreover, we will assume that the barycenter is at the origin and that $|E|=\pi$.
Let $B$ be the unit ball centered at 0.
Then, it is straightforward to check:
$$
|E\Delta B|=\frac 12 \int_0^{2\pi} |(1+u)^2-1| \,,
$$
$$
\mathcal{H}^1(\partial E)=\int_0^{2\pi} \sqrt{(1+u)^2+|u'|^2} \,,
$$
$$
\int_0^{2\pi} \cos \theta (1+u)^3=0=\int_0^{2\pi} \sin \theta (1+u)^3\,,\,\,\,\,\,\,\int_0^{2\pi} (1+u)^2=2\pi\,.
$$
We will use the  following result by Fuglede (Lemma 2.2 in \cite{Fu89Transactions}): 
\begin{thm}\label{lemma2.2fuglede}
Let $K_n$ be a sequence of convex compact sets of area $\pi$, converging in the Hausdorff metric to the unit ball $B$. Assume $K_n$ to be parametrized in the form
$$
K_n=\{y\in \R^2: y=tx(1+u_n(x)), x\in \mathbb{S}^1, t\in [0,1]\},
$$ 
where $u_n$ is a Lipschitz  function. The following estimate holds:
$$
\|u_n'\|_{L^\infty} \leq 2 \frac{1+\|u_n\|_{L^\infty}}{1-\|u_n\|_{L^\infty}} \,{\|u_n\|^{\frac 12}_{L^\infty}}
\,.
$$
\end{thm}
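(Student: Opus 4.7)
The plan is to exploit the convexity of each $K_n$ together with its star-shapedness with respect to $0$ via a supporting-tangent-line argument in polar coordinates. The key geometric observation is that the tangent line to $\partial K_n$ at any boundary point $P$ must lie at distance at least $\rho_n := \min_{\theta} r_n(\theta) = 1 + \min u_n$ from the origin, because $B(0,\rho_n) \subset K_n$ (star-shapedness) and the tangent line supports the convex set $K_n$. This distance has a clean expression in terms of $r_n$ and $r_n'$, and matching the two yields the desired pointwise bound on $u_n' = r_n'$.

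Concretely I would proceed in three steps. \emph{Step 1.} Since $K_n$ is star-shaped with respect to $0$, the inscribed ball $B(0,\rho_n)$ is contained in $K_n$, and $\rho_n \geq 1-\|u_n\|_{L^\infty}$. \emph{Step 2.} Since $u_n$ is Lipschitz, $r_n$ is differentiable at a.e.~$\theta$; at such a point a direct computation from the parametrization $\theta \mapsto P(\theta) = r_n(\theta)(\cos\theta,\sin\theta)$ shows that the outward unit normal $\nu(\theta)$ makes an angle $\alpha$ with the radial direction satisfying $\cos\alpha = r_n/\sqrt{r_n^2 + (r_n')^2}$, so that the distance from $0$ to the tangent line at $P(\theta)$ equals
\[
P(\theta)\cdot\nu(\theta) \;=\; \frac{r_n(\theta)^2}{\sqrt{r_n(\theta)^2 + r_n'(\theta)^2}}.
\]
By convexity together with $B(0,\rho_n)\subset K_n$ this quantity is at least $\rho_n$, which rearranges to
\[
(r_n')^2 \;\leq\; \frac{r_n^2\bigl(r_n^2-\rho_n^2\bigr)}{\rho_n^2}.
\]
\emph{Step 3.} Plug in the elementary bounds $r_n \leq 1+\|u_n\|_{L^\infty}$, $\rho_n \geq 1-\|u_n\|_{L^\infty}$, and the telescoping identity
\[
r_n^2 - \rho_n^2 \;\leq\; (1+\|u_n\|_{L^\infty})^2 - (1-\|u_n\|_{L^\infty})^2 \;=\; 4\|u_n\|_{L^\infty};
\]
taking essential suprema over $\theta$ and using $u_n' = r_n'$ yields exactly the stated estimate. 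The Hausdorff convergence $K_n \to B$ ensures $\|u_n\|_{L^\infty} < 1$ for $n$ large, so the denominator $1-\|u_n\|_{L^\infty}$ is positive.

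I do not foresee any substantive obstacle. The only calculation requiring care is the polar-coordinate identity $\cos\alpha = r_n/\sqrt{r_n^2 + (r_n')^2}$ used in Step 2, which is a routine differentiation of the boundary parametrization and a rotation by $\pi/2$; the rest is elementary algebra together with the a.e.~differentiability of the Lipschitz function $u_n$.
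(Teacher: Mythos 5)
Your proof is correct, and the paper does not actually give a proof of this statement (it is quoted from Fuglede's 1989 paper as Lemma~2.2 there), so you are supplying the argument that the paper outsources to \cite{Fu89Transactions}. The route you take — compute the distance from the origin to the tangent (supporting) line as $r_n^2/\sqrt{r_n^2+(r_n')^2}$, bound it below by the inradius $\rho_n\ge 1-\|u_n\|_{L^\infty}$ via $B(0,\rho_n)\subset K_n$, and then estimate $r_n^2-\rho_n^2\le 4\|u_n\|_{L^\infty}$ — is exactly the geometric mechanism behind Fuglede's lemma, so this is essentially the same approach rather than a genuinely different one. The only points worth making fully explicit in a write-up are: (i) the a.e.\ differentiability of the Lipschitz $u_n$ justifies stating the pointwise bound a.e.\ and then passing to the essential supremum; (ii) at a point of differentiability the tangent line is a supporting line of the convex body $K_n$, which is what lets you invoke $B(0,\rho_n)\subset K_n$ to lower-bound the distance; and (iii) the Hausdorff convergence is only used to ensure $\|u_n\|_{L^\infty}<1$ eventually, so the bound should be read as holding for $n$ large, as you already observe.
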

In \cite{publi_CV}  the following result has been proved. It will be useful in the shape optimization problem within the class of convex sets.
\begin{thm}\label{thm_pb_CV}
Let $m$ be defined by
$$m=\inf_{u\in\mathcal{L}}\frac{ \displaystyle \int_0^{2\pi}[(u')^2-u^2]d\theta}{\displaystyle
\left[\int_0^{2\pi} |u| d\theta\right]^2}
$$
where $\mathcal{L}$ is the space of $H^1(0,2\pi)$ functions satisfying the constraints:
\begin{itemize}
\item[(L1)]
$\displaystyle \int_0^{2\pi} u\,d\theta=0$
\item[(L2)]
$\displaystyle\int_0^{2\pi} u \cos(\theta)\,d\theta=0=\int_0^{2\pi}\sin(\theta)u\,d\theta$
\item[(L3)]
$u(0)=u(2\pi)$.
\end{itemize}
Then $\displaystyle m=\frac{1}{2(4-\pi)}$. 
\end{thm}

\section{Minimization of $\displaystyle \frac{\delta(\Omega)}{\lambda_0^2(\Omega)}$ within the class of compact connected sets}\label{section3}
In this section, we consider compact connected sets $K\in\mathcal{K}$ of positive measure (in order the shape functionals 
$\delta$ and $\lambda_0$ be well-defined).
We are going to prove the following result. 
\begin{thm}\label{thmsection3}
There exists $C>0$ such that  
for any planar compact connected set $K\in\mathcal{K}$ it holds
$$
\lambda_0^2(K)\leq C \delta(K).
$$ 
\end{thm}
In the proof we will use the following simple lemma, which involves the notion of $d_{L^1}(E,F)$ of sets $E,F\in\mathcal{K}$, defined as the $L^1$ distance of their characteristic functions.
 \begin{lemma}\label{lemmapalle}
Let $B_1$ and $B_2$ be two balls such that their area equals $\pi$ and the distance between their centers equals $a\leq 2$.
 Then
$$
d_{L^1}(B_{1},B_{2})=4\arcsin\left(\frac{a}{2}\right)+2a\sqrt{1-\frac{a^2}{4}}.
$$
Moreover, for small values of $a$ it holds $d_{L^1}(B_{1},B_{2})=4a + o(a).$
\end{lemma}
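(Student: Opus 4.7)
The plan is to reduce $d_{L^1}(B_1,B_2) = |B_1 \triangle B_2|$ to the area of the intersection lens of two unit disks and then Taylor expand near $a = 0$. Since $|B_1| = |B_2| = \pi$, both balls have radius $1$, and after a rigid motion we may place the two centers at $(\pm a/2, 0)$. From the general identity
\begin{equation*}
|B_1 \triangle B_2| = |B_1| + |B_2| - 2|B_1 \cap B_2| = 2\pi - 2|B_1 \cap B_2|
\end{equation*}
the entire computation reduces to evaluating the lens area $|B_1 \cap B_2|$.

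For this, observe that the two bounding circles meet at $x = 0$, $y = \pm \sqrt{1 - a^2/4}$, and for each such $y$ the horizontal slice of the intersection is the segment of length $2\sqrt{1 - y^2} - a$. Integrating this slice length from $-\sqrt{1-a^2/4}$ to $\sqrt{1-a^2/4}$, using the antiderivative $\int \sqrt{1-y^2}\,dy = \tfrac12(y\sqrt{1-y^2} + \arcsin y)$, yields after straightforward simplification
\begin{equation*}
|B_1 \cap B_2| = 2\arccos(a/2) - a\sqrt{1 - a^2/4}.
\end{equation*}
Substituting back and using the identity $\pi/2 - \arccos(a/2) = \arcsin(a/2)$ gives the claimed closed-form expression for $d_{L^1}(B_1,B_2)$.

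For the asymptotic as $a \to 0$, it suffices to plug in the expansions $\arcsin(a/2) = a/2 + O(a^3)$ and $\sqrt{1 - a^2/4} = 1 + O(a^2)$; the first contribution produces $2a + o(a)$ and the second produces $2a + o(a)$, so that $d_{L^1}(B_1,B_2) = 4a + o(a)$. The proof is an elementary planar area calculation and presents no real obstacle; the only thing to watch is the correct bookkeeping of the constants in the lens-area formula.
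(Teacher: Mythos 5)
Your proof is correct and arrives at the right answer, via a genuinely different elementary route than the paper. You reduce to the lens area $|B_1\cap B_2|$ using the symmetric-difference identity $|B_1\triangle B_2|=2\pi-2|B_1\cap B_2|$ and evaluate the lens by Cavalieri integration of horizontal slices; the paper instead places the centers at $(0,0)$ and $(a,0)$, observes that $d_{L^1}$ is four times the area of one ``petal'' of the symmetric difference, and computes that petal's area via Green's theorem from an explicit parametrization of its three boundary arcs. Both are short planar computations; your version avoids any orientation bookkeeping in the line integral, while the paper's avoids the antiderivative of $\sqrt{1-y^2}$. Either is fine.

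One thing you should flag explicitly rather than pass over: the closed form stated in the lemma, $4a\arcsin(a/2)+2a\sqrt{1-a^2/4}$, has a spurious factor of $a$ multiplying the $\arcsin$, since $4a\arcsin(a/2)=O(a^2)$ would make the whole expression $2a+o(a)$, not $4a+o(a)$. The correct formula, which your computation (and the paper's own proof) produces, is $4\arcsin(a/2)+2a\sqrt{1-a^2/4}$; you should not present this as matching ``the claimed closed-form expression'' without noting the discrepancy. Your asymptotic bookkeeping ($4\arcsin(a/2)\sim 2a$ and $2a\sqrt{1-a^2/4}\sim 2a$, totalling $4a+o(a)$) is correct and in fact confirms that the statement as printed is a typo.
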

\begin{proof}
Up to a rotation we can assume that $B_1=B_{(0,0)}$ and $B_2=B_{(a,0)}$, where $B_{(a,0)}$ denote the ball of area $\pi$ centered at $(a,0)$, $0\leq a\leq 2$.
Let 
 $\tau=\arcsin(a/2)$. The quantity $d_{L^1}(B_{(0,0)},B_{(a,0)})$ is equal to  4 times the area of the domain $E$ whose boundary is composed by the following three arcs:
\begin{enumerate}
\item
$(a+\cos t, \sin t), t \in (0,\alpha), \alpha=\frac{\pi}{2}+\tau$;
\item
$(\cos t, \sin t), t \in (0,\beta), \beta=\frac{\pi}{2}-\tau$;
\item
$(t,0), t \in (1,1+a)$.
\end{enumerate}
Hence by Green's theorem
$$
d_{L^1}(B_{1},B_{2})=4|E|=\frac 12 \int_0^{\frac{\pi}{2}+\tau}(a+\cos \tau)\cos \tau +\sin^2t\; d\tau-\frac 12 \int_0^{\frac{\pi}{2}-\tau} 1+0\; d\tau=\tau+\frac a2\cos \tau,
$$
and we are done. Moreover as $a$ tends to $0$ it follows $d_{L^1}(B_{1},B_{2})=4a + o(a).$.
\end{proof}

We are now going to prove Theorem \ref{thmsection3}, {showing 
that for a minimizing sequence $K_n$, $\displaystyle \liminf_{n\to \infty} \ratioo(K_n)>0$. We will distinguish the cases where $K_n$ converges to a ball or to a set different from a ball.}
{\begin{proof}[Proof of Theorem \ref{thmsection3}]
Let $K_n$ be a minimizing sequence in $\mathcal{K}$, that is,  
$$
\ratioo(K_n)=\displaystyle \frac{\delta(K_n)}{\lambda_0^2(K_n)} \to \inf_{E\in\mathcal{K}} \ratioo(E)= \inf_{E\in\mathcal{K}} \frac{\delta(E)}{\lambda_0^2(E)}.
$$ 
Without loss of generality, we can assume that all the sets $K_n$ have area $\pi$.
By Theorem \ref{AFNthm}  one has
$$
\ratioo(K_n)\le \ratioo(S)=\ratio(S)\approx 0.406\,,
$$
where $S$ denotes the stadium introduced in Theorem \ref{AFNthm}. 
Since $\lambda_0(E)\leq 2$ for any set $E$, we get 
\begin{equation}\label{perimeterbounded}
P(K_n)\leq 16.6\,.
\end{equation} 
Therefore the sets $K_n$ are all contained in a fixed ball, since they are connected and their perimeter is uniformly bounded. 
Theorem \ref{Blaschke} gives us the existence of a connected compact set $K$ towards which $K_n$ converges in the Hausdorff metric.
\\
There exists a set $\hat{K}$ such that $\chi_{K_n}\to \chi_{\hat{K}}$ in $L^1$ 
and $|\hat{K}|=\pi$, by Proposition \ref{cptL^1} and Remark \ref{DeGiorgi-Minkowski}.
We are going to prove that  $\hat{K}=K$ 
(we note that the only Hausdorff convergence does not guarantee that $|{K}|=\pi$).
\\
By point (3) in Proposition \ref{prophp} applied to $K_n^c$ et $K^c$, we have
\begin{equation}\label{premiereinegalitefonctionscaracteristiques}
\chi_{\hat{K}}\leq \chi_{{K}}\,,
\end{equation}
since $\chi_{K_n}\to \chi_{\hat{K}}$ in $L^1$.
Therefore 
\begin{equation}\label{premiereinegaliteaires}
|K|\geq \pi=|\hat{K}|\,.
\end{equation}
\\
Since $K_n\to K$ in the Hausdorff metric,  $K$ is contained into the $\varepsilon$-enlargement $K_n^{\varepsilon}$ of $K_n$. By the definition of the Minkowski perimeter,
 we have, for every small
$\varepsilon>0$,
\begin{equation}\label{minkowski}
|K|\leq |K_n^{\varepsilon}| = |K_n|+\varepsilon P(K_n)+o(\varepsilon) = \pi+\varepsilon P(K_n)+o(\varepsilon)\,.
\end{equation}
Since
$P(K_n)$ are uniformly bounded, inequality (\ref{minkowski}) 
yields $|K|\leq
\pi$. This inequality and (\ref{premiereinegaliteaires}) imply
$|K|=\pi$. 
We deduce  that 
$K=\hat{K}$ a.e. from  (\ref{premiereinegalitefonctionscaracteristiques}).
\\
Since $K_n\to K$ in $L^1$, as $n\to \infty$, we have $\lambda_0(K_n)\to \lambda_0(K)$. 
Indeed,  by the triangle inequality,
$$
\pi|\lambda_0(K_n)-\lambda_0(K)|\leq  d_{L^1}(K_n,K)+ d_{L^1}(B,B_n)\,.
$$
The first term in the right hand side tends to 0, as $n\to \infty$ by the $L^1$ convergence of $K_n$ to $K$. The second one tends to 0 by Lemma \ref{lemmapalle}, since
$$
\left|x_1^{G_n}-x_1^G\right|\leq \frac{1}{\pi}\int_{K_n\setminus K}|x_1| dx_1dx_2\,,
$$
and the last term tends to 0, since the diameter of $K_n$ is uniformly bounded and $|K_n\setminus K|\to 0$, as $n\to \infty$. The same holds for the second coordinate.
\\
Now, only two possibilities may occur:
either $\lambda_0(K)>0$ or 
$\lambda_0(K)=0$.
\begin{enumerate}
\item
In the first case, since $K_n$ is a minimizing sequence and $\inf_{E\in\mathcal{K}} \frac{\delta(E)}{\lambda_0^2(E)}>0$, one has 
$0<P({K})\leq \liminf\limits_{n\to \infty} P(K_n)$ by Theorem \ref{Henrot-Zucco}. 
Therefore
$$
\displaystyle \liminf_{n\to \infty}\frac{\delta(K_n)}{\lambda_0^2(K_n)}\geq \frac{\delta(K)}{\lambda_0^2(K)}>0\,.
$$
\item
In the second  case  we can assume that
 $\lambda(K_n)=2\varepsilon_n\to 0$, with $\varepsilon_n\to 0$ as $n\to \infty$, since $\lambda_0(K_n)\geq \lambda(K_n)$. 
By Theorem \ref{thmBCH} one has
$$
\delta(K_n)\geq 0.45 \cdot 4 \varepsilon_n^2\,.
$$
We are now going to prove that there exists an absolute positive constant $A$ such that
\begin{equation}\label{difflambda}
|\lambda(K_n)-\lambda_0(K_n)|\leq \frac{4A}{\pi}\varepsilon_n.
\end{equation}
Therefore
\begin{equation}\label{stimaliminf}
\frac{\delta(K_n)}{\lambda_0^2(K_n)}
\geq
\frac{\delta(K_n)}{\left(\lambda(K_n)+\frac{4A}{\pi}\varepsilon_n\right)^2}
\geq \frac{1.8}{\left(2+\frac{4A}{\pi}\right)^2}\,,
\end{equation}
which implies that
$$\displaystyle \liminf_{n\to \infty}\frac{\delta(K_n)}{\lambda_0^2(K_n)}>0\,.
$$
To prove (\ref{difflambda})  it is sufficient to find  a positive constant $A$ such that 
\begin{equation}
\label{distance-centres-a-prouver}
\|G_n- F_n\| \leq A \varepsilon_n 
\end{equation}
where $G_n=(x_1^{G_n},x_2^{G_n})$ is the barycentre of $K_n$ and $F_n$ is the centre of an optimal ball for $\lambda(K_n)$.
Indeed, by the triangle inequality, 
$$
d_{L^1}(K_n,B_{G_n})\leq d_{L^1}(K_n, B_{F_n})+d_{L^1}(B_{G_n}, B_{F_n})\,,
$$
where $B_{F_n}$ is an optimal ball for $K_n$ with respect to the Fraenkel asymmetry.
This inequality together with  (\ref{distance-centres-a-prouver}) and Lemma \ref{lemmapalle} imply (\ref{difflambda}).
\\
We want to prove (\ref{distance-centres-a-prouver}), which will end the proof of this case.
We can always assume that an optimal ball for the Fraenkel asymmetry is centered in 0, that is, $F_n=0$.
We are now going to estimate $\displaystyle x_1^{G_n}=\frac{1}{\pi}\int_{K_n} x_1 dx_1 dx_2$.  
Writing the last integral on $(K_n\setminus B)\cup B\setminus (B\setminus K_n)$ 
and recalling that  $\displaystyle \frac{1}{\pi}\int_{B} x_1 dx_1 dx_2=0$,
we get
$$
 |x_1^{G_n}|=\frac{1}{\pi}\left|\int_{K_n\setminus B} x_1dx_1dx_2 - \int_{B \setminus K_n} x_1dx_1dx_2\right|
 \leq 
 \frac{1}{\pi}\int_{K_n\setminus B} \left|x_1\right|dx_1dx_2 + \frac{1}{\pi} \int_{B \setminus K_n} \left|x_1\right|dx_1dx_2\,.
 $$
We observe that $diam(K_n\setminus B)\leq diam(K_n)$. Now,  $diam(K_n)\to 2$. Indeed
by definition of Hausdorff convergence, $K_n\subset B^{\varepsilon}$ and $B\subset K_n^{\varepsilon}$. Therefore
$diam(K_n)\leq 2 +2\varepsilon$ and $2\leq diam(K_n)+2\varepsilon$.
Using  this property to estimate the first of the last two terms, we get
$$
 |x_1^{G_n}|\leq
 \frac{2 \varepsilon_n}{\pi}+ \frac{\varepsilon_n}{\pi} ={\left(\frac{3}{\pi}\right)}{\varepsilon_n}\,,
 $$
 where we have used that
 $|K_n\setminus B|=|B\setminus K_n|=\varepsilon_n$, since
 $\lambda(K_n)=2\varepsilon_n$.
The same estimate can be obtained for $|x_2^{G_n}|$. Therefore 
\begin{equation}
\label{A}
\|G_n-F_n\|=\|G_n-0\|\leq A\varepsilon_n=\frac{3\sqrt{2}}{\pi}{\varepsilon_n}
\end{equation} and (\ref{distance-centres-a-prouver}) is proved.
\end{enumerate}
\end{proof}}
\begin{rem}\label{remarksection2aboutexistence}
We conjecture that the infimum of $\ratioo$ in the class of the connected sets is attained by the dumbbell described in Remark \ref{dumbbell}. 

In the case where the minimizing sequence $K_n$ converges to the ball, we get  the following
 estimate from below of $\displaystyle \liminf\limits_{n\to \infty} \ratioo(K_n)$ (see (\ref{stimaliminf}) with $A=\frac{3\sqrt{2}}{\pi}$ given in (\ref{A})):
 $$
\liminf\limits_{n\to \infty} \ratioo(K_n) \geq 0.13\,.
 $$
Notice that this estimate is lower than the value of 
$\ratioo$ computed on the dumbbell (see Remark \ref{dumbbell}); this is  the reason why the existence of an optimal set for this problem is still an open problem. 
We remark that the technique introduced in \cite{BCH_COCV}, cannot be applied to the functional $\ratioo$ in the class $\mathcal{K}$ of connected compact sets in order to exclude sequences converging to a ball.
\end{rem}


\begin{rem}\label{contro-esempioFuglede} As mentioned by Fuglede (we refer to \cite{Fuscopreprint}), the assumption that $\Omega$ is connected cannot be removed. Indeed one can construct the following sequence of non-connected sets $\Omega_n$, given by the union of 
the disk centered in $(2,0)$, of radius $R_n=1-\frac{1}{n}$, and the disk
centered in
$\left(-\frac{2(n-1)^2}{2n-1},0\right)$,
 of radius
$r_n=\sqrt{\frac{2n-1}{n^2}}$\,.
It is easy to check that $|\Omega_n|=\pi$, the barycentre of $\Omega_n$ is the origin, $\delta(\Omega_n)=R_n+r_n-1\to 0$ as $n\to \infty$
and $\lambda_0(\Omega_n)=2$. Thus $\lim_{n\to\infty} \ratioo(\Omega_n)=0$.

This example shows  why the classical De Giorgi perimeter is not suitable for the barycentric asymmetry $\lambda_0$. Indeed,  
the set $\tilde{\Omega}_n$
obtained by connecting the above two balls by a long segment
has the same De Giorgi perimeter as the perimeter of $\Omega_n$, since the De Giorgi perimeter of the long segment is null.
Thus $\lim_{n\to\infty} \ratioo(\tilde\Omega_n)=0$.
On the contrary, for the Minkowski perimeter, 
$\delta(\Omega_n)\to +\infty$, since the length of the long segment counts twice.
\end{rem}


\begin{rem}
The notion of Minkowski perimeter has a central role in the second part of the above proof, in inequality (\ref{minkowski}), to prove that $|K|=\pi$.
\end{rem}


\section{Minimisation of $\displaystyle \frac{\delta(\Omega)}{\lambda_0^2(\Omega)}$ within the class of compact convex sets}\label{Section4}
In this section we prove the following result :
\begin{thm}
There exists an optimal set for $\displaystyle \inf\limits_{\Omega\in\mathcal{C}}\ratioo(\Omega)$.
\end{thm}
We recall that  $\mathcal{C}$ is the family of convex planar sets.
\begin{proof}
Let $K_n$ be a minimizing sequence of convex compact sets so that
$$
\liminf\ratioo(K_n)=\inf_{\Omega\subset\mathcal{C}}\ratioo(\Omega).
$$
It follows a uniform bound on $\frac{\delta(K_n)}{\lambda_0^2(K_n)}$ and hence using the definition of $\lambda_0$ we have that $\delta(K_n)$ is uniformly bounded. Therefore the sets $K_n$ are all contained in a fixed ball, since they are convex and they perimeter is uniformly bounded. 

Theorems \ref{Blaschke} and \ref{hausdorffconvex} entail the existence of a convex compact set towards which $K_n$ converges in the Hausdorff metric.
Now, as in the proof of the previous theorem, two possibilities may occur:
\begin{enumerate}
\item
$K_n$ converges to a ball $B$ in the Hausdorff metric;
\item
$K_n$ converges to a set $K$ different from a ball  in the Hausdorff metric.
\end{enumerate}
In the next theorem we are going to analyse the first case, proving  that 
$\liminf \ratioo >\ratioo(S)=0.406$ 
 where $S$ is the stadium of Theorem \ref{AFNthm}. This means that a minimizing sequence cannot converge to a ball. Therefore the only possibility for a minimizing sequence is the second one. In this case we can prove that $K$ is a minimizer by using an analogous argument to that of the proof of case (2) of Theorem \ref{thmsection3}.
\end{proof}


\begin{thm}\label{proposition042}
Let $K_n$ be a sequence of convex compact sets converging to a ball in the Hausdorff metric. Then 
$\liminf\limits_{n\to \infty}\ratioo(K_n) \geq 0.41$.
\end{thm}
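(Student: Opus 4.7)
The plan is to reduce to nearly spherical sets and then prove a sharp Fourier-type inequality. Since each $K_n$ is convex and converges to a ball in the Hausdorff metric, after rescaling so that $|K_n|=\pi$ and translating so that the barycentre of $K_n$ lies at the origin, we may write $K_n=\{tx(1+u_n(x)):x\in S^1,\,t\in[0,1]\}$ with $u_n$ Lipschitz and $\|u_n\|_\infty\to 0$; Theorem~\ref{lemma2.2fuglede} then forces $\|u_n'\|_\infty\to 0$ as well.

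Writing the Fourier expansion $u_n=a_0^n+\sum_{k\geq 1}(a_k^n\cos k\theta+b_k^n\sin k\theta)$ and setting $M_k^n=(a_k^n)^2+(b_k^n)^2$, the area condition $\int_0^{2\pi}(1+u_n)^2 d\theta=2\pi$ gives $a_0^n=O(\|u_n\|_2^2)$, while the barycentre conditions $\int_0^{2\pi}e^{\pm i\theta}(1+u_n)^3 d\theta=0$ give $M_1^n=O(\|u_n\|_2^4)$. A second-order Taylor expansion of the Minkowski perimeter $\int_0^{2\pi}\sqrt{(1+u_n)^2+(u_n')^2}\,d\theta$ combined with these constraints and Plancherel's identity yields
$$\delta(K_n)=\tfrac{1}{4}\sum_{k\geq 2}(k^2-1)M_k^n+o(\|u_n\|_2^2),$$
whereas $(1+u_n)^2-1=u_n(2+u_n)$ together with $u_n>-1$ (eventually) gives
$$\pi\lambda_0(K_n)=\int_0^{2\pi}|u_n|\,d\theta\cdot(1+o(1)).$$
The theorem therefore reduces to the scale-invariant inequality
$$\inf\Bigl\{\tfrac{\pi^2}{4}\cdot\tfrac{\sum_{k\geq 2}(k^2-1)M_k}{(\int_0^{2\pi}|u|\,d\theta)^2}\,:\, u\in H^1(S^1)\setminus\{0\},\ a_0=a_1=b_1=0\Bigr\}>0.41.$$

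The test function $u=\cos 2\theta$ attains the value $3\pi^2/64\approx 0.463$, which one expects to be the sharp infimum. A plain Cauchy--Schwarz bound $(\int|u|)^2\leq 2\pi\int u^2$ together with $\sum(k^2-1)M_k\geq 3\sum M_k$ only yields the constant $3/8=0.375$, falling just short of $0.41$, so a genuine sharpening is required. The main obstacle is to close this small but definite gap. The plan is to apply the Riesz rearrangement inequality (Theorem~\ref{livreLL}) to $\bigl(\int|u|\bigr)^2=\iint|u(t)|\cdot 1\cdot|u(\theta)|\,dt\,d\theta$ in order to replace $u$ by its symmetric decreasing rearrangement (which does not increase the weighted Dirichlet energy), and simultaneously to split $u=u_2+u_{\geq 3}$, exploiting the explicit \emph{sharp} ratio $(\int|u_2|)^2/\int u_2^2=16/\pi<2\pi$ valid for pure quadrupolar modes and the stronger weight $k^2-1\geq 8$ for $k\geq 3$ on the remainder. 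The delicate point is that the constraints $a_1=b_1=0$ are not preserved by rearrangement: to handle this I would use Lemma~\ref{lemmaODE}, which furnishes the explicit Green's function for $-h''-h$ on the codimension-three subspace orthogonal to $\{1,\cos\theta,\sin\theta\}$, allowing us to quantitatively absorb the $k=1$ correction produced by the rearrangement procedure and verify that the resulting lower bound strictly exceeds $0.41$.
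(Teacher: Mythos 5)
Your reduction to a linearized functional is correct and, up to bookkeeping, identical to the paper's: your target quantity
\[
\frac{\pi^2}{4}\cdot\frac{\sum_{k\geq 2}(k^2-1)M_k}{\bigl(\int_0^{2\pi}|u|\bigr)^2}
\]
coincides with the paper's $\frac{\pi}{4}m$ where $m$ is the infimum of $\int[(u')^2-u^2]/(\int|u|)^2$ under (L1)--(L3), and your Taylor expansions of $\delta(K_n)$ and $\pi\lambda_0(K_n)$, your use of the area/barycentre constraints to kill the $k=0,1$ modes, the $\cos 2\theta$ test value $3\pi^2/64\approx 0.463$, and the observation that Cauchy--Schwarz alone gives only $3/8$ are all correct. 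The paper also has to carry out the passage from the nonlinear functional $J$ to the linearized $m$ carefully (this is Proposition~\ref{prop_ultima}: existence of a minimizer $u_\varepsilon$ of a perturbed problem, renormalization $v_\varepsilon=u_\varepsilon/\varepsilon$, and $H^1$-weak convergence to an admissible test function $v_0$); your sketch takes that limit for granted, which is a smaller omission.

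The genuine gap is in the final step, proving the linearized inequality $>0.41$, and the concrete tool you propose there fails. Applying Theorem~\ref{livreLL} to $\bigl(\int|u|\bigr)^2=\iint|u(t)|\cdot 1\cdot|u(\theta)|\,dt\,d\theta$ gives nothing: with $g\equiv 1$ one has $g^*\equiv 1$, and since $\int|u|=\int|u|^*$ the inequality collapses to an identity. It also does not control the numerator, because symmetric decreasing rearrangement of $u$ \emph{decreases} $\int(u')^2-u^2$ (P\'olya--Szeg\H{o} for the first term, equimeasurability for the second) while preserving $\int|u|$, so the ratio can only drop, and the constraints (L2) are destroyed in the process; you would therefore be bounding the wrong quantity from below. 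The plan to ``absorb the $k=1$ correction'' via Lemma~\ref{lemmaODE} and to split $u$ into a quadrupolar part plus a remainder with different constants on each piece is left at the level of intention; the cross-term in $\bigl(\int|u_2|+\int|u_{\geq 3}|\bigr)^2$ is exactly what makes a naive mode-splitting fall short of $0.41$. The paper's actual argument is structurally different: it writes the Euler--Lagrange equation for a minimizer $u$, normalizes $u_0=u/m$ so that $u_0$ solves $u_0''+u_0=-\operatorname{sgn}(u_0)$ up to the Lagrange multipliers, inverts this ODE with the explicit Green's function $G$ of Lemma~\ref{lemmaODE} to obtain $1/m=\int|u_0|=\iint\operatorname{sgn}(u_0)(t)\,H(\theta-t)\,\operatorname{sgn}(u_0)(\theta)\,dt\,d\theta$ for an explicit kernel $H$, and only then applies Theorem~\ref{livreLL} --- to the two-valued function $\operatorname{sgn}(u_0)$ (whose rearrangement is an explicit $\chi_{(-a,a)}-\chi_{(-a,a)^c}$), not to $u$ itself. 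This turns the problem into a one-parameter optimization over $a$ plus a numerical upper bound on $\int_0^\pi(\pi-x)H^*(x)\,dx$ obtained by majorizing $H$ by an explicit piecewise polynomial $M$. That is the mechanism by which the paper closes the gap between $3/8$ and $0.41$, and it is absent from your proposal.
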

Let $E$ be a nearly spherical set with barycenter in 0, and assume $E$ to be written in polar coordinates with respect to 0 as in (\ref{nearly-spherical}).
Hence if $E$ is a convex set, then the functional $\ratioo(E)$  can be written as a function $J(u)$ of the parameter $u$ and, according to the computations in Section \ref{sec2}, we obtain:
\begin{equation}\label{def-J}
J(u)=\frac{\pi}{2} \frac{\displaystyle\int_0^{2\pi}\left[\sqrt{(1+u)^2+u'(\theta)^2}-1\right]d\theta}{\left[\frac 12 \displaystyle\int_0^{2\pi} |(1+u)^2-1|d\theta\right]^2}.
\end{equation}
We are interested in minimizing the functional $J(u)$ with respect to $u\in \mathcal{NL}$, that is, $u\in H^1(0,2\pi)$, satisfying the constraints of fixed area and barycentre in $0$:
\begin{itemize}
\item [(NL1)]
$\displaystyle\frac{1}{2\pi}\int_0^{2\pi} (1+u)^2d\theta=1$;
\item [(NL2)]
$\displaystyle\int_0^{2\pi}\cos(\theta)[1+u(\theta)]^3d\theta=0=\int_0^{2\pi}\sin(\theta)[1+u(\theta)]^3d\theta$;
\item [(NL3)]
$u(0)=u(2\pi)$.
\end{itemize}
This leads to a complicated problem in the calculus of variations, thus, the strategy consists in replacing this problem by a simpler one which can be seen as a sort of linearization. In order to do this, we define
\begin{equation}\label{opepl}
m=\inf_{u\in\mathcal{L}}\frac{ \displaystyle \int_0^{2\pi}[(u')^2-u^2]d\theta}{\displaystyle
\left[\int_0^{2\pi} |u| d\theta\right]^2}
\end{equation}
where $\mathcal{L}$ is the space of $H^1(0,2\pi)$ functions satisfying the constraints:
\begin{itemize}
\item[(L1)]
$\displaystyle \int_0^{2\pi} u\,d\theta=0$;
\item[(L2)]
$\displaystyle\int_0^{2\pi} u \cos(\theta)\,d\theta=0=\int_0^{2\pi}\sin(\theta)u\,d\theta$;
\item[(L3)]
$u(0)=u(2\pi)$.
\end{itemize}

\begin{prop}\label{prop_ultima}
Let $0<\e<1/24$. Consider $J:\mathcal{NL}\to \R$ defined in (\ref{def-J}) and let
\begin{equation}\label{opep}
m_\e:=\inf \{J(u), \|u\|_{L^\infty}=\ep,\;u\in\mathcal{NL}\}.
\end{equation}
It holds
\begin{equation}\label{conclusion}
\liminf_{\e \to 0} m_\e \geq \frac{\pi}{4}m.
\end{equation}
\end{prop}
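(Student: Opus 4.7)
The plan is to linearize $J(u)$ around $u\equiv 0$ and reduce to the linear quotient $Q(u):=\int_0^{2\pi}(u'^2-u^2)\,d\theta/\bigl(\int_0^{2\pi}|u|\,d\theta\bigr)^2$ that defines $m$. Let $u$ be admissible for $m_\varepsilon$, so $\|u\|_{L^\infty}=\varepsilon<1/24$ and (NL1)--(NL3) hold; convexity of the corresponding set allows us to invoke Theorem~\ref{lemma2.2fuglede}, which yields $\|u'\|_{L^\infty}\leq C_1\sqrt{\varepsilon}$ with $C_1$ uniform in $u$, so $\|u'\|_{L^\infty}\to 0$ as $\varepsilon\to 0$. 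Taylor expanding around $(u,u')=(0,0)$,
\begin{equation*}
\sqrt{(1+u)^2+u'^2}-1 \;=\; u+\tfrac12 u'^2 + R(u,u'),\qquad |R|\leq C\bigl(\|u\|_{L^\infty}+\|u'\|_{L^\infty}^2\bigr)(u^2+u'^2),
\end{equation*}
and invoking (NL1), which rewrites as $\int u\,d\theta=-\tfrac12\int u^2\,d\theta$, one gets
\begin{equation*}
N(u):=\int_0^{2\pi}\!\bigl[\sqrt{(1+u)^2+u'^2}-1\bigr]\,d\theta \;=\; \tfrac12\int_0^{2\pi}(u'^2-u^2)\,d\theta+O(\varepsilon)\Bigl(\textstyle\int_0^{2\pi} u^2+\int_0^{2\pi} u'^2\Bigr).
\end{equation*}
Since $1+u/2\geq 1-\varepsilon/2>0$, the denominator satisfies $D(u):=\tfrac12\int_0^{2\pi}|(1+u)^2-1|\,d\theta=\int_0^{2\pi}|u|(1+u/2)\,d\theta=(1+O(\varepsilon))\int_0^{2\pi}|u|\,d\theta$.

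Next I would project $u$ onto the linear admissibility set. Set $\tilde u:=u-\alpha-\beta\cos\theta-\gamma\sin\theta$ with $\alpha,\beta,\gamma$ chosen so that $\tilde u$ satisfies (L1),(L2),(L3). Expanding (NL1) gives $\alpha=\frac{1}{2\pi}\int u\,d\theta=-\frac{1}{4\pi}\int u^2\,d\theta=O(\varepsilon^2)$, while expanding (NL2) via $(1+u)^3=1+3u+3u^2+u^3$ similarly yields $\beta,\gamma=O(\varepsilon^2)$. A direct computation using the orthogonality of $\{1,\cos\theta,\sin\theta\}$ and integration by parts gives
\begin{equation*}
\int_0^{2\pi}(\tilde u'^2-\tilde u^2)\,d\theta \;=\; \int_0^{2\pi}(u'^2-u^2)\,d\theta+2\pi\alpha^2,\qquad \int_0^{2\pi}|\tilde u|\,d\theta=\int_0^{2\pi}|u|\,d\theta+O(\varepsilon^2).
\end{equation*}
Since $\tilde u$ is admissible for the linear problem, $\int(\tilde u'^2-\tilde u^2)\geq m\bigl(\int|\tilde u|\bigr)^2$; writing $A:=\int(u'^2-u^2)$ and $B:=\int|u|$, this yields $A\geq mB^2\bigl(1-O(\varepsilon^2/B)-O(\varepsilon^4/B^2)\bigr)$. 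The crucial input is a lower bound $B\geq c\varepsilon^{3/2}$: since $\|u\|_{L^\infty}=\varepsilon$ is attained at some $\theta_0$ and $\|u'\|_{L^\infty}=O(\sqrt{\varepsilon})$, one has $|u|\geq\varepsilon/2$ on an interval of length $\sim\sqrt{\varepsilon}$ about $\theta_0$, whence $B\gtrsim\varepsilon^{3/2}$. Thus $A\geq mB^2(1-o(1))$ uniformly in $u$.

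Finally, the Wirtinger inequality $\int\tilde u'^2\geq 4\int\tilde u^2$ on $\tilde u$ (valid under (L1),(L2)) yields $\int\tilde u^2+\int\tilde u'^2\leq\tfrac{5}{3}(A+2\pi\alpha^2)$, and hence $\int u^2+\int u'^2=O(A)+O(\varepsilon^4)$. Combined with $B\gtrsim\varepsilon^{3/2}$ (so that $\varepsilon^4/A\leq \varepsilon^4/(mc^2\varepsilon^3)=O(\varepsilon)$), the Taylor remainder in $N(u)$ is $o(A)$. Consequently
\begin{equation*}
J(u)=\tfrac{\pi}{2}\cdot\tfrac{N(u)}{D(u)^2} \;=\; \tfrac{\pi}{4}\cdot\tfrac{A}{B^2}\,(1+o(1)) \;\geq\; \tfrac{\pi}{4}\,m\,(1+o(1)),
\end{equation*}
with $o(1)$ uniform over admissible $u$; taking the infimum in $u$ and the $\liminf$ as $\varepsilon\to 0$ gives the claim. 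The main obstacle is the uniform control of all the error terms, and it hinges on the lower bound $B\gtrsim\varepsilon^{3/2}$ (which uses both $\|u\|_{L^\infty}=\varepsilon$ and the convexity estimate from Theorem~\ref{lemma2.2fuglede}); without it, the $O(\varepsilon)(\int u^2+\int u'^2)$ correction from the linearization could dominate $\tfrac12 A$ and the asymptotic identification $J(u)\sim\tfrac{\pi}{4}Q(u)$ would break down.
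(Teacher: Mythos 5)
Your argument is correct and takes a genuinely different route from the paper's. The paper proves the estimate by (i) replacing $J$ with a smaller functional $J_2$, (ii) proving that $J_2$ admits a minimizer $u_\e$, (iii) constructing an explicit piecewise-affine test function $w^\e$ to obtain $J_2(u_\e)\le C$ and hence an $H^1$-bound on $v_\e=u_\e/\e$, and (iv) passing to the limit $v_\e\to v_0$ and recognizing $v_0$ as a test function for the linear problem \eqref{opepl}. You instead work directly with an arbitrary admissible $u$: you project it onto the linear constraint set, control the error terms by the Wirtinger inequality $\int\tilde u'^2\ge 4\int\tilde u^2$, and establish the uniform bound $J(u)\ge\frac{\pi}{4}m(1-o(1))$ with $o(1)$ independent of $u$. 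The crucial piece of your argument that the paper's route does not need is the lower bound $\int|u|\gtrsim\e^{3/2}$ (obtained from $\|u\|_{L^\infty}=\e$ and the convexity estimate $\|u'\|_{L^\infty}\lesssim\sqrt{\e}$ of Theorem~\ref{lemma2.2fuglede}); the paper sidesteps this by normalizing the minimizer ($\|v_\e\|_{L^\infty}=1$, so that $\int|v_0|>0$ by uniform convergence). Your approach is arguably more direct in that it avoids the existence step for the modified problem and the construction of the explicit competitor $w^\e$, at the cost of having to quantify the error in the denominator; both arguments ultimately rely on the same linearization of the numerator and the same gradient bound coming from convexity. One should note that, exactly as in the paper's proof, the invocation of Theorem~\ref{lemma2.2fuglede} tacitly restricts the admissible $u$ in \eqref{opep} to those parametrizing convex sets, which is the situation in which the Proposition is subsequently applied.
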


\begin{proof}
The proof follows the following steps:
\begin{enumerate}
\item we estimate the infimum value fo the  optimization problem \eqref{opep} by considering an auxiliary problem 
(problem \eqref{opepbis}), whose infimum yields a smaller value; 
\item we prove that the auxiliary problem has a minimizer $u_\e$;
\item we prove that  $v_\e=\frac{u_\e}{\e}$ 
(which belongs to the unit sphere of $L^\infty$) is bounded in $H^1$ and converges
uniformly to some
function $v_0$;
\item  passing to the limit as $\e\to 0$, we prove that $v_0$ is a test function for the optimization
problem \eqref{opepl}
whence the desired inequality holds.
\end{enumerate}

In the sequel of the proof $C$ will denote an absolute constant independent of $\varepsilon$.

{\it Step 1.} 
Since $\displaystyle \int_0^{2\pi} (2u+u^2)
=0$ by (NL1), 
the minimization of  $J(u)$ is equivalent to the minimization 
(with the same constraints) of
$$
J_1(u)=\frac{\pi}{2}\displaystyle\frac{\displaystyle \int_0^{2\pi}\left[\sqrt{(1+u)^2+u'^2}-1\right]d\theta
- \int_0^{2\pi} (u+u^2/2)d\theta}{\left[\frac 12 \displaystyle\int_0^{2\pi} |(1+u)^2-1|d\theta\right]^2}\,.
$$
We are going to estimate the numerator of $J_1$; more precisely we are going to prove that
\begin{equation}\label{numJ}
\int_0^{2\pi}\left[\sqrt{1+2u+u^2+{u'}^2}-1 
-(u+u^2/2)\right]d\theta \ge \frac{1}{2}\int_0^{2\pi}\left[{u'}^2 -u^2 -\frac{1}{4} {u'}^4 - u{u'}^2\right]d\theta
+C\e^3,
\end{equation}
for some constant $C$.
Assume that $\e<1/24$ and 
\begin{equation}\label{stimaFuglede}
\|u'\|_{L^\infty}\leq
3\sqrt{\e}
\end{equation}
(this is possible by the estimate 
$\|u\|_{L^\infty}\leq {\e}$ and Theorem \ref{lemma2.2fuglede}). 
We first observe that for $|\rho|\leq \frac{1}{2}$,
\begin{equation}\label{I1}
\sqrt{1+\rho}\ \geq 1+\frac{\rho}{2}
- \frac{\rho^2}{8} +
\frac{\rho^3}{16} -\frac{\rho^4}{8}\,.
\end{equation}
By \eqref{stimaFuglede}, one has the estimate $|2u+u^2+{u'}^2|\leq 2\e+\e^2+9\e\leq 12\e \leq \frac 12$.
We can apply \eqref{I1} to $2u+u^2+{u'}^2$ to infer
$$
\sqrt{(1+u)^2+{u'}^2} -1 \geq u +\frac{1}{2}{u'}^2 -\frac{1}{8} {u'}^4
-\frac{1}{2} u{u'}^2 +C\e^3\,.
$$
Therefore (\ref{numJ}) holds.

We are going to estimate the denominator of $J_1$. 
Since $|2u+u^2|\leq (2+\e)|u|$ one has that
$$
{\displaystyle \left[\frac 12\int_0^{2\pi} |(1+u)^2-1|\right]^2\leq 
\left(1+\frac{\e}{2}\right)^2  \left[\int_0^{2\pi} |u|\right]^2\,}.
$$ 
Therefore, under the constraints (NL1), (NL2), (NL3) one has
$
J(u)=J_1(u)\geq J_2(u)
$, where
\begin{equation}\label{defJ3}
J_2(u)=\frac{\pi}{2}\ \displaystyle\frac{\displaystyle \frac{1}{2}\int_0^{2\pi}\left[{u'}^2
-u^2 -\frac{1}{4} {u'}^4 - u{u'}^2\right] +C\e^3}{\displaystyle \left(1+\frac{\e}{2}\right)^2
 \left[\int_0^{2\pi} |u|\right]^2}\,.
\end{equation}
Defining
\begin{equation}\label{opepbis}
m'_\e:=\inf \{J_2(u), \|u\|_{L^\infty}=\ep,\ u \in\mathcal{NL}\}\,,
\end{equation}
we have $m_\e \geq m'_\e$.

\medskip\noindent

{\it Step 2.}
We prove that problem \eqref{opepbis} has a minimizer $u_{\e}$  for every fixed $\e>0$.

Let $u_n^{\e}$ be a minimizing sequence for $J_2$.
We know that $\|u_n^{\e}\|_{L^\infty}=\e$ and $\|(u_n^{\e})'\|_{L^\infty}\leq 3\sqrt{\e}$ for every $n$ (by  (\ref{stimaFuglede})).  
Therefore $u_n^{\e}\to u_{\e}$ weakly in $W^{1,\infty}(0,2\pi)$ and uniformly in $(0,2\pi)$, as $n\to \infty$.

To pass to the limit, as $n\to \infty$, in $J_2(u_n^\e)$, we need to study   the integral in the numerator of $J_2$. We will use a standard argument in the calculus of variations.
Notice that for small $|s|$ and $|\xi|$ (recall that $|s|\leq \e\leq \frac{1}{24}$ and $|\xi|\leq 3\sqrt{\e}\;$), 
the function
$j(s,\xi)={\xi}^2(1-s)
-s^2 -\frac{1}{4} {\xi}^4$
 is convex with respect to $\xi$. 
This gives
$$
j(s,\xi)\geq
j(s,\eta)+\nabla_\xi j(s,\eta)\cdot(\xi-\eta)\,,
$$
where $\nabla_\xi j(s,\xi)=2\xi (1-s)-\xi ^3$.
Therefore
\begin{equation}\label{disuguaglianza_de_giorgi}
\int_0^{2\pi} j(u_n^{\e},(u_n^{\e})')
\geq 
\int_0^{2\pi} j(u_n^{\e},u'_{\e}) 
+ \int_0^{2\pi}
\nabla_\xi j(u_n^{\e}, u'_{\e})\cdot((u_n^{\e})'-u'_{\e}).
\end{equation}
The uniform convergence of $u_n^{\e}$ to $u_{\e}$
implies that $j(u_n^{\e},u'_{\e})$ converges
in $L^1(0,2\pi)$ to
$j(u^{\e},u'_{\e})$, as $n\to \infty$.
Moreover $(u_n^{\e})'-u'_{\e}$  converges weakly to $0$  in
$ L^\infty(0,2\pi)$ and $\norma{\nabla_\xi j(u_n^{\e}, u'_{\e})}{L^{\infty}}$ is bounded uniformly in $n$ and $\varepsilon$.
Passing to the lim inf in (\ref{disuguaglianza_de_giorgi}),
we get
$$
\liminf_{n\to +\infty}
\int_0^{2\pi} j(u_n^{\e},(u_n^{\e})')
\geq 
\int_0^{2\pi} j(u^{\e},u'_{\e}) 
\,,
$$
which gives an estimate for the numerator of $J_2(u)$.
We deduce the existence of  a minimizer $u_{\e}$ for $J_2$.

{\it Step 3.} Let us consider  the renormalising sequence
$
v_\e=\frac{u_\e}{\e}\,.
$ 
We are going to prove some estimates on $v_\e$ which allows to compute the limit of $v_\e$, as $\e\to 0$.
The estimates on $v_\e$ will be established thanks to 
 the test function $w_\e$, defined here below.
 
Let $a_\e=\pi/4-\e \pi/6$ and $b_\e=3\pi/4-\e\pi/6$.
Let $w^\e$ be the function, piecewise affine, $\pi$-periodic, defined  by
$$w^\e(t)=\left\lbrace
\begin{array}{lc}
\e\ \dfrac{t}{a_\e} & t\in [0, a_\e]\,, \\
-2\e \ \dfrac{t-a_\e}{b_\e-a_\e}+\e & t\in [a_\e, b_\e]\,, \\
-\e\ \dfrac{\pi -t}{\pi -b_\e} & t\in [b_\e, \pi]\,. \\
\end{array}
\right.$$
It is easy to see that $w^\e$ satisfies $\|w^\e\|_{L^\infty}=\e$ and condition (NL3).
It also satisfies (NL2), since 
$(1+w^\e)^3$ is $\pi-$periodic and therefore  orthogonal to sine and cosine. 
We are going to  check
(NL1), that is, 
$$
2\int_0^{\pi} w^\e + \int_0^{\pi} {w^\e}^2 =0.
$$
Elementary calculations provide:
$$
\int_0^{\pi} w^\e =\frac 12 a_\e\e -\frac 12 (\pi -b_\e)\e ,\quad \int_0^{\pi}
{w^\e}^2 =\pi \frac{\e^2 }{3}\,.
$$
Therefore (NL1) is satisfied as soon as $a_\e+b_\e-\pi=-\e/3$ which holds true by the definition of $a_\e$ and $b_\e$.
We also remark that 
$$
\int_0^{2\pi} {{w^\e}'}^2  \leq C\e^2\,,\,\,\,\,\,\,\,\,\,
\int_0^{2\pi} {{w^\e}}^2  \leq C\e^2\,,\,\,\,\,\,\,\,\,\,
\int_0^{2\pi} {{w^\e}'}^4  \leq C\e^4\,,\,\,\,\,\,\,\,\,\,
\int_0^{2\pi} {w^\e{w^\e}'}^2  \leq C\e^3
$$
and
$$
{\int_0^{2\pi} |w^\e| = \e \pi}\geq \frac{\e}{2}\,.
$$
These estimates imply that $J_2(w^\e)\leq C$ for every $\e$. 
Therefore
$J_2(u_{\e})\leq J_2(w^\e)\leq C$ 
which yields to
$$
\int_0^{2\pi} {(u_{\e})'}^2
\leq \int_0^{2\pi} u_{\e}^2+\frac{1}{4}{(u_{\e})'}^4+u_{\e}{(u_{\e})'}^2\; d\theta+C\e^3
+
C\left[\int_0^{2\pi} |u_{\e}|\right]^2\,.
$$
We deduce that
$\displaystyle \int_0^{2\pi} {(u_{\e})'}^2 \leq C\e^2\,.$

From the definition of $v_\e$ we have $\|v_\e\|_{L^\infty}=1$, moreover by the above estimate we get
\begin{equation}\label{controlbis}
\int_0^{2\pi} {v'_\e}^2 \leq C,
\end{equation}
and hence the sequence $v_\e$ is bounded in
$H^1(0,2\pi)$ and, as $\e\to 0$,  $v_\e$ converges weakly in $H^1(0,2\pi)$ and uniformly to some function $v_0$, up to subsequences.
Using \eqref{stimaFuglede} we deduce that $\|v'_\e\|_{L^\infty}\leq \frac{3}{\sqrt{\e}}$,
 and then using \eqref{controlbis}, we have
$$
\int_0^{2\pi} {v'_\e}^4 \leq \frac{9}{\e} \int_0^{2\pi} {v'_\e}^2 \leq
\frac{C}{\e}.$$
Moreover
$$
\left|\int_0^{2\pi}v_\e {v'_\e}^2\right| \leq  \int_0^{2\pi} {v'_\e}^2 \leq {C}
$$
by \eqref{controlbis} and $\|v_\e\|_{L^\infty}=1$.
\\
{\it Step 4.}  We now prove that the function $v_0$ found in {\it Step 3} is a test function for the optimization problem
\eqref{opepl}. This will allow us to prove the statement of this Proposition.
\\
We observe that, by (\ref{opepbis}) and the definition of $v_\e$, we have
\begin{equation}\label{defJ3bis}
m'_\e=\displaystyle \frac{\pi}{2}\frac{\displaystyle \frac{1}{2}\int_0^{2\pi}\left[{v_\e'}^2
-v_\e^2 -\frac{\e^2}{4} {v'_\e}^4 - \e v_\e{v'_\e}^2\right] +C\e}{\displaystyle\left(1+\frac{\e}{2}\right)^2
 \left[\int_0^{2\pi} |v_\e|\right]^2}.
\end{equation}
Passing to the limit in \eqref{defJ3bis}, we get
$$
\liminf_{\e\to 0} m'_\e \geq \frac{\pi}{2}\ \displaystyle\frac{\displaystyle \frac{1}{2}\int_0^{2\pi}\left[{v_0'}^2
-v_0^2\right]}{\displaystyle \left[\int_0^{2\pi} |v_0|\right]^2}.$$
On the other hand, passing to the limit in (NL1) and (NL2), we see that $v_0$
satisfies (L1) and (L2)
and therefore is an admissible test function for the optimization problem
\eqref{opepl}. 
For example, (NL1) is equivalent to $\displaystyle \int_0^{2\pi} (u_\ep^2+2u_\ep)=0$. This implies that 
$
\displaystyle
0\leq \int_0^{2\pi} u_{\ep}^2 = -2  \int_0^{2\pi} u_{\ep}\leq C\e^2
$
by the estimate $\|u_\e\|_{L^{\infty}}\leq \e$. Therefore, by the definition of $v_\e$, one has $\displaystyle \int_0^{2\pi} v_\e\to 0$, as $\ep\to 0$, which gives $\displaystyle\int_0^{2\pi}v_0=0$.

The inequality
$\displaystyle 
\liminf_{\e\to 0} m_\e \geq \liminf_{\e\to 0} m'_\e \geq \frac{\pi}{4}m$ follows.
\end{proof}
We can now prove Theorem \ref{proposition042}.
\begin{proof}
As we already observed, we can write $K_n$ in polar coordinates (see (\ref{polar_coord})); this implies that
$$
\liminf_{n\to \infty}\ratioo(K_n)=
\liminf_{\e \to 0} m_\e 
\geq \frac{\pi}{4}m
$$
where $m$, $m_\e$ are defined by (\ref{opepl}), (\ref{opep}), respectively. .
By Proposition \ref{prop_ultima} 
 it is sufficient to prove the estimate  
\begin{equation}\label{aimODE}
\frac{\pi}{4} m > 0.41\,.
\end{equation}
Notice that by Theorem \ref{thm_pb_CV}, it holds $\displaystyle m=\frac{1}{2(4-\pi)}$.
This implies
that $\displaystyle \frac{\pi}{4} m \approx 0.457$, and the desired estimate holds.
\end{proof}
\begin{rem} 
Although Fuglede \cite{Fu89Transactions} was interested  in the uniform spherical deviation, that is, the Hausdorff distance of a set $E$ from the ball of same measure centered at the barycenter of $E$, one can easily
deduce from his results the inequality
$
\delta(E)\geq C(n)\lambda_0^2(E)
$
for nearly spherical sets (see Theorem 3.1 in \cite{Fuscopreprint}), where $C(n)$ is a constant depending on the dimension. In particular the following estimate can be proved in the plane:
$$
\delta(E)\geq \frac{1}{16}\lambda_0(E)^2\,. 
$$
However this estimate is not sufficient to exclude sequences converging to the ball.

Our first attempt to prove Theorem \ref{proposition042} was the following.
For the denominator of $J$ one has
$$
\left[\frac 12 \displaystyle\int_0^{2\pi} |(1+u(\theta))^2-1|d\theta\right]^2\leq
\left[\frac 12 \displaystyle\int_0^{2\pi} |u(\theta)|(2+\varepsilon)d\theta\right]^2=
\left(1+\frac{\varepsilon}{2}\right)^2\norma{u}{L^1}^2\,.
$$
For the numerator
$$
\int_0^{2\pi}\left(\sqrt{(1+u)^2+u'(\theta)^2}d\theta-1\right)d\theta
 $$
$$
\geq
\int_0^{2\pi} \left[u + \frac{u^2+(u')^2}{2}-\frac{[4 u^2+4u^3+4uu'+u^4+u'^4+2u^2u'^2]^2}{8}+\frac{8u^3}{16}\right]d\theta
$$
$$
\geq
\int_0^{2\pi}
\left[
\frac{(u')^2}{2}-\frac{u^2}{2}\right]
d\theta+o(2)
\geq 
\frac{c-1}{2} \int_0^{2\pi}u^2 d\theta+o(2)
$$
where $c=4$ is such that
$$
\int_0^{2\pi}{(u')^2}d\theta
\geq 
c \int_0^{2\pi}u^2d\theta\,.
$$
Therefore
$$
\liminf_{\varepsilon\to 0} J(u)\geq 
\frac {3\pi}{4}  \displaystyle \frac{\displaystyle\int_0^{2\pi}u^2d\theta}{\displaystyle\left[\int_0^{2\pi}|u|d\theta\right]^2}\geq \frac 38=0.375
$$
by H\"older inequality. 
Again, this estimate is not sufficient to exclude sequences converging to the ball. 
\end{rem}

\section{On the regularity and on the shape of the optimal convex set}\label{Section5}
In this section we prove that an optimal set for the minimization of $\ratioo$ within the class of planar convex sets has $C^{1,1}$ boundary. About its shape, we conjecture that the stadium $S$ which minimizes $\ratio$ (see Theorem \ref{AFNthm}) also minimizes the functional $\ratioo$. In particular we show that within the class of stadia, $S$ is the only one satisfying the optimality conditions, presented in Theorem \ref{ocPrinceton}.
Unfortunately we are not able to prove that $S$ is an optimizer within the class all convex sets.

The proof of the regularity of the optimal set makes the use of the first order optimality
condition in the same spirit of \cite{LNP}.

 Let us first recall how to write these optimality conditions, in
the case of convexity constraint, when representing the boundary of the convex set with the \emph{gauge function}. We recall that the gauge function $\uu$
is  defined through the polar representation of the boundary of the convex set as $\uu(\theta)=1/r(\theta)$, where $(r(\theta)\cos\theta, r(\theta)\sin\theta)$ is a boundary point.  Thus we consider the optimization problem 
\begin{equation}\label{pbthesejimmy}
\min\left\{
j(\uu), \uu\in H^{1}(0,2\pi), \uu''+\uu\geq 0, \int_0^{2\pi} \frac{d\theta}{\uu^2(\theta)}=m_0
\right\}
\end{equation}
(see   Proposition  2.3.3 of \cite{theseLamboley}), where $j(\uu)$ represents an analytic functional related to a shape functional.
\begin{prop}\label{proptheseLamboley}
Assume  that $\uu_0$ solves (\ref{pbthesejimmy}) where $j: H^{1}(0,2\pi)\to \R$ is $C^2$. Then there exist $\xi_0$ non-negative, $\mu \in \R$ such that $\xi_0=0$ on the support of 
$(\uu''_0 + \uu_0)$
and   
$$
dj(\uu_0;\varphi) = <\xi+\xi'',\varphi> - \mu\; dm(\uu_0;\varphi)
$$ 
for every $\varphi \in H^1(0,2\pi)$, where $dj(\uu;\varphi)$ indicates the derivative of $j(\uu)$ along the deformation $\varphi$ and $dm(\uu_0;\varphi)$ is the corresponding derivative of $ m(\uu):=\int_0^{2\pi} \frac{d\theta}{\uu^2(\theta)}$.
\end{prop}

In this section, we prove the following regularity result:
\begin{thm}\label{C11}
A minimizer of $\ratioo$ within the class of convex compact sets of the plane has   $C^{1,1}$ boundary. Moreover the strictly convex parts of the boundary (except the intersection with the circle of the barycentric disk) are of class $C^\infty$.
\end{thm}

For the proof, we first express the optimality condition of Proposition \ref{proptheseLamboley} in our context:
\begin{prop}\label{conditions-optimalite-gauge}
Let $r, \theta$ be the polar coordinates. Let $\uu(\theta)=\frac{1}{r(\theta)}$
be the gauge function used to describe the boundary of a set $E$.
The optimal set satisfies the following condition: there exists
 $\xi \in H^1$ such that $\xi \ge 0$ for every $\theta\in [0,2\pi]$ and $\xi(\theta)=0$ at every $\theta$ corresponding to strictly convex boundary points of $E$  and there exist $\hat{\mu}_0, \hat{\mu}_1, \hat{\mu}_2
\in \R$ such that
\begin{equation}\label{op1L}
-\frac{1}{2\pi \lambda_0^2}\frac{\uu+\uu''}{(\uu^2+\uu'^2)^{\frac 32}} - \frac{2\delta}{\pi
\lambda_0^3}\frac{sign(\uu^2-1)}{\uu^3}=\xi'' + \xi -\frac{\hat{\mu}_0}{\uu^3}-
3\frac{\hat{\mu}_1 \cos \theta +\hat{\mu}_2 \sin\theta}{2 \uu^4}\,.
\end{equation}
\end{prop}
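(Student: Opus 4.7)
The plan is to apply Proposition \ref{proptheseLamboley}, extended to handle additional equality constraints, to the shape functional $j(u)=\delta(\Omega(u))/\lambda_0(\Omega(u))^2$ written in the gauge parametrization $r=1/u$. After normalizing $|\Omega|=\pi$ and placing the barycenter at the origin of the polar coordinates, I would rewrite the two shape functionals in the closed forms
$\delta(u)=\frac{1}{2\pi}\int_0^{2\pi}\frac{\sqrt{u^2+u'^2}}{u^2}\,d\theta-1$ and
$\lambda_0(u)=\frac{1}{2\pi}\int_0^{2\pi}\frac{|1-u^2|}{u^2}\,d\theta$,
so that the problem reads: minimize $j(u)$ subject to the convexity inequality $u''+u\geq 0$, the area equality $m(u)=\int_0^{2\pi}d\theta/u^2=2\pi$ and the two barycentric equalities $b_1(u)=\int_0^{2\pi}\cos\theta\, u^{-3}d\theta=0$, $b_2(u)=\int_0^{2\pi}\sin\theta\, u^{-3}d\theta=0$.

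First I would state a multi-constraint version of Proposition \ref{proptheseLamboley}: a standard Lagrange multiplier argument produces a nonnegative $\xi\in H^1$ vanishing on the support of $u''+u$, together with real multipliers $\mu,\nu_1,\nu_2$ such that
$j'(u)v=\langle\xi''+\xi,v\rangle-\mu\,m'(u)v-\nu_1 b_1'(u)v-\nu_2 b_2'(u)v$
for every admissible $2\pi$-periodic test function $v$. The remaining task is then to compute the four derivatives and to identify them with the corresponding terms in \eqref{op1L}.

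Then I would compute the left-hand side of \eqref{op1L} by differentiating $j=\delta/\lambda_0^2$ as $j'(u)v=\delta'(u)v/\lambda_0^2-2\delta\,\lambda_0'(u)v/\lambda_0^3$. For $\delta'(u)v$ I would appeal to Hadamard's formula $P'(\Omega;V)=\int_{\partial\Omega}\kappa\,V\cdot n\,d\sigma$: the gauge perturbation $\delta r=-\delta u/u^2$ yields a normal velocity $V\cdot n=r\delta r/\sqrt{r^2+r'^2}$ and an arclength element $\sqrt{r^2+r'^2}\,d\theta$, while the polar expression of the signed curvature is $\kappa=u^3(u+u'')/(u^2+u'^2)^{3/2}$; combining these gives $P'(u)v=-\int_0^{2\pi}\frac{u+u''}{(u^2+u'^2)^{3/2}}v\,d\theta$, which produces the first term on the left of \eqref{op1L} after the factor $1/(2\pi\lambda_0^2)$. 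Differentiating the integrand of $\lambda_0$ pointwise where $u\neq 1$ via $\frac{d}{du}|u^{-2}-1|=2\,sign(u^2-1)/u^3$ yields the second term after the factor $-2\delta/\lambda_0^3$. On the right-hand side, the derivatives $m'(u)v=-2\int_0^{2\pi} v/u^3\,d\theta$ and $b_i'(u)v=-3\int_0^{2\pi}(\cos\theta,\sin\theta)\, v/u^4\,d\theta$ are computed by differentiation under the integral; after relabelling $\hat{\mu}_0=-2\mu$ and $\hat{\mu}_i=-2\nu_i$ they furnish exactly the three terms of the right-hand side of \eqref{op1L}. The pointwise form of \eqref{op1L} then follows from the variational identity by the fundamental lemma of the calculus of variations.

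The only delicate point is the weak differentiability of the integrand of $\lambda_0$ at the angles where $u(\theta)=1$, i.e.\ at the intersections of the boundary with the circle of the barycentric disc. For a convex $u$ these crossings are finite and transverse, so $sign(u^2-1)/u^3$ is a bounded weak derivative and the variational identity extends across them; this is precisely the set that must be excluded from the higher regularity statement of Theorem \ref{C11}, so the difficulty reappears there rather than in the derivation of \eqref{op1L}.
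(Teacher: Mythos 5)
Your proposal is correct and follows essentially the same route as the paper: apply the Lamboley--Novruzi optimality condition (Proposition \ref{proptheseLamboley}) with extra multipliers for the area and barycenter equality constraints, then compute the shape derivatives of $P$, $\lambda_0$, and the constraints in the gauge parametrization. The paper simply states the derivative of $P$ and of $\pi\lambda_0$ without the intermediate Hadamard/polar-curvature computation you carry out, but the resulting formulas and the relabelling of multipliers agree. Your closing remark about the crossings $\{u=1\}$ is a reasonable observation but is not addressed explicitly in the paper's proof either; it is handled implicitly in the regularity discussion of Theorem \ref{C11}.
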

\begin{proof}
We are going to apply Proposition  \ref{proptheseLamboley}. 
To do that, we need  to compute the derivative of the functional $\ratioo$ and those of  the constraints; we will consider the analytic expressions given by $J(v)$, with constraint $v\in \mathcal{NL}$. With abuse of notation we write $J(\uu)$ to consider the functional $J(v)$ rewritten in terms of the gauge function $\uu$, as considered in \cite{theseLamboley}.
\begin{enumerate}
\item
The derivative of $J(\uu)$ along the deformation $\varphi$ is
$$
dJ(\uu;\varphi) =\frac{1}{2\pi \lambda_0^2} dP(\uu;\varphi) - 2\frac{\delta}{\lambda_0^3}\; d\lambda_0(\uu;\varphi),
$$
where the derivative of $P$ is 
$$
dP(\uu;\varphi)=\displaystyle -\int \frac{\uu+\uu''}{(\uu^2+\uu'^2)^{\frac
32}}\varphi\,.
$$
The derivative of  $\pi \lambda_0$ is the derivative of $\displaystyle \int_{\{\uu<1\}}
\chi_{\Omega \setminus B}+ \int_{\{\uu>1\}}\chi_{B \setminus \Omega}$
which gives
$$
\pi\; d\lambda_0(\uu;\varphi)= \displaystyle\frac{1}{2}\int_{\{\uu<1\}} \frac{-2\varphi}{\uu^3} + \frac{1}{2}\int_{\{\uu>1\}}
\frac{2\varphi}{\uu^3}\,.
$$
\item
The constraints (NL) rewritten in terms of $\uu$ are
 \begin{eqnarray*}
 \frac 12 \int \frac{d\theta}{\uu^2(\theta)}&=&\pi,\\ \label{uno}
 \frac  12\int_0^{2\pi} \frac{\cos \theta}{\uu^3(\theta)}&=&0\\ \label{due}
 \frac 12\int_0^{2\pi} \frac{\sin
 	\theta}{\uu^3(\theta)}&=&0.\label{tre}
 \end{eqnarray*}
 
By derivating each left-hand sides of the above conditions, and introducing the corresponding Lagrange multipliers $\mu_0,\mu_1,\mu_2$, we obtain respectively:
$$\displaystyle -\int \frac{\hat{\mu}_0}{\uu^3},\qquad 
-3\int \frac{\hat{\mu}_1 \cos \theta }{2 \uu^4},  \qquad
-3\int \frac{\hat{\mu}_2 \sin\theta}{2 \uu^4}\,,
$$
so that the desired result follows by Proposition \ref{proptheseLamboley}.
\end{enumerate}
\end{proof}

We are now able to prove Theorem \ref{C11}:
\begin{proof}
We are going to use the notations of the above proposition.
Notice that on the strictly convex parts of the boundary, $\xi=0$ and $\uu$ satisfies a second order ordinary differential equation:
\begin{enumerate}
\item 
in the exterior of the unit ball $\uu<1$, and so $\uu''$ is continuous by equation (\ref{pbthesejimmy}). By a classical bootstrap argument  $\uu$ is $C^{\infty}$;
\item 
in the exterior of the unit ball $\uu>1$,  and so $\uu''$ is continuous by equation (\ref{pbthesejimmy}). By a bootstrap argument  $\uu$ is $C^{\infty}$;
\item
on the boundary of the unit ball  $\uu=1$, $\uu''$ is bounded, but not continuous. Thus $\uu$ is $W^{2,\infty}$ there.
\end{enumerate}
This and the above proposition imply that on strictly convex parts on $\partial B$, $\uu$ is $C^{1,1}$.

Now, let us prove that $\Omega$ is $C^1$. If this was not the case, we would have a corner for some $\theta_0$.
This implies that the gauge function is such that $\uu''+\uu$ contains a Dirac mass, with a positive weight at $\theta_0$.
Thus, the $H^1$ function $\xi$ appearing in the optimality condition (\ref{op1L}) also satisfies that the quantity $\xi''+\xi$ contains a Dirac mass at $\theta_0$. Now, since $\xi(\theta_0)=0$ and $\xi\geq 0$, the weight of this Dirac mass must be non-negative, in contradiction with the minus sign appearing in the left-hand side of \eqref{op1L} in front
of $\uu''+\uu$.

We are left with the conjunctions between a strictly convex part of the boundary and a non strictly convex part. For that, it is sufficient to remark that 
any  $C^{2}(\R^+) (C^{\infty}(\R^+))$ function, which is zero at $x=0$, can be extended by 0 on $\R$, getting a  $C^{1,1}(\R)$ function. This ends the proof that an optimal set is $C^{1,1}$. 
\end{proof}

We are going to write the optimality conditions on strictly convex parts in a different way. In particular, this will
give the explicit expression of the Lagrange multipliers  in \eqref{op1L}.
We can assume that all the considered sets have area equal to $\pi$. 

\begin{thm}\label{ocPrinceton}
Let $\Omega$ be an optimal set minimizing the functional $\ratioo$ and assume its barycenter is at the origin. Let $B$ be the unit ball centered at the origin. Let $\partial \Omega^{IN}=\partial \Omega \cap B$, $\partial \Omega^{OUT}=\partial \Omega \cap B^c$, $\partial B^{IN}=\partial B \cap \Omega$, $\partial B^{OUT}=\partial B \cap \Omega^c$. Then at every strictly convex boundary point $(x,y)$ of $\Omega$ the curvature $C(x,y)$ satisfies:
\begin{equation}\label{opcond}
{C(x,y)}=1-3\delta+\frac{4\delta}{2\pi \lambda_0} \left(|\partial B^{OUT}|-|\partial B^{IN}|\right)
\pm \frac{4\delta}{\lambda_0} +\hat{\mu_1} x + \hat{\mu_2} y\,, 
\end{equation} 
($+$ at the exterior of $B$ and $-$ in the interior of $B$)
where
$$
\hat{\mu_1}= \frac{4\delta}{\pi\lambda_0} \left[\int_{\partial B^{OUT}} \cos t dt -  \int_{\partial B^{IN}} \cos t dt \right]\,,
$$
$$
\hat{\mu_2}=\frac{4\delta}{\pi\lambda_0} \left[\int_{\partial B^{OUT}} \sin t dt -  \int_{\partial B^{IN}} \sin t dt \right]\,.
$$
\end{thm}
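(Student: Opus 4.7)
The plan is to start from the Euler--Lagrange equation (\ref{op1L}) of Proposition \ref{conditions-optimalite-gauge}. On any strictly convex part of $\partial\Omega$ the obstacle multiplier $\xi$ vanishes, so (\ref{op1L}) is an algebraic pointwise identity. Multiplying it by $-2\pi\lambda_0^2\,u^3$ and recognizing $\mathcal{C}=u^3(u+u'')/(u^2+u'^2)^{3/2}$ together with $x=\cos\theta/u$, $y=\sin\theta/u$ converts it into
\[
\mathcal{C} \;=\; 2\pi\lambda_0^2\hat\mu_0 \;\mp\; \frac{4\delta}{\lambda_0} \;+\; 3\pi\lambda_0^2\hat\mu_1\,x \;+\; 3\pi\lambda_0^2\hat\mu_2\,y
\]
(with $-$ on $\partial\Omega^{IN}$, $+$ on $\partial\Omega^{OUT}$). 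It remains to identify the three Lagrange multipliers of Proposition \ref{conditions-optimalite-gauge} in closed form.

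To that end, I would test the weak form of (\ref{op1L}) against three gauge-function variations corresponding to the infinitesimal rigid motions and dilations of $\Omega$:
\[
v_{\mathrm{scale}}=-u,\quad v_{\mathrm{trans},x}=-u^2\cos\theta+uu'\sin\theta,\quad v_{\mathrm{trans},y}=-u^2\sin\theta-uu'\cos\theta,
\]
the latter two being the first-order change in $u$ under translation of $\Omega$ in the $x$- or $y$-direction (they produce on $\partial\Omega$ the normal velocities $n_1$ and $n_2$ respectively). On every flat segment (where $u=a\cos\theta+b\sin\theta$) each of these variations reduces to a scalar multiple of $u$ --- explicitly $-u$, $-au$, $-bu$ --- and hence satisfies $v''+v=0$, so an integration by parts kills the $\langle\xi+\xi'',v\rangle$ contribution; the barycenter constraints (NL2), $\int\cos\theta/u^3\,d\theta=0=\int\sin\theta/u^3\,d\theta$, kill cross-terms between the three Lagrange multipliers. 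Each test therefore isolates one multiplier on the right-hand side of (\ref{op1L}).

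For $v_{\mathrm{scale}}$: the Frenet identity $\int_{\partial\Omega}\mathcal{C}\,\vec X\cdot\vec n\,ds = +P(\Omega)$ and the polar calculation $\int_0^{2\pi}\mathrm{sign}(u-1)/u^2\,d\theta = |\partial B^{OUT}|-|\partial B^{IN}|-2\pi\lambda_0$ (immediate from writing $|\Omega\setminus B|=\tfrac12\int_{u<1}(1/u^2-1)\,d\theta$, $|B\setminus\Omega|=\tfrac12\int_{u>1}(1-1/u^2)\,d\theta$ with $|\Omega|=|B|=\pi$) make $\int(\text{LHS of }(\ref{op1L}))\,v_{\mathrm{scale}}\,d\theta$ explicit; matching against the only surviving term $2\pi\hat\mu_0$ on the right-hand side yields $2\pi\lambda_0^2\hat\mu_0 = 1-3\delta + 2\delta(|\partial B^{OUT}|-|\partial B^{IN}|)/(\pi\lambda_0)$. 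For $v_{\mathrm{trans},x}$: the Frenet identity $\int\mathcal{C}\,n_1\,ds=0$ kills the curvature contribution, while the divergence theorem on $\Omega\setminus B$ and $B\setminus\Omega$ gives the $\lambda_0$-contribution as $-M_1/\pi$, with $M_1 := \int_{\partial B^{OUT}}\cos t\,dt - \int_{\partial B^{IN}}\cos t\,dt$; short integration-by-parts computations yield $\int v_{\mathrm{trans},x}/u^3\,d\theta=0$, $\int v_{\mathrm{trans},x}\cos\theta/u^4\,d\theta=-\pi$, and $\int v_{\mathrm{trans},x}\sin\theta/u^4\,d\theta=0$, so only the $\hat\mu_1$-term $\tfrac{3\pi\hat\mu_1}{2}$ survives on the right; matching gives $3\pi\lambda_0^2\hat\mu_1 = \tfrac{4\delta M_1}{\pi\lambda_0}$, the announced value. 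An identical computation handles $\hat\mu_2$, and substitution into the curvature formula produces exactly (\ref{opcond}).

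The principal obstacle is bookkeeping: checking that on flat segments each of $v_{\mathrm{scale}}, v_{\mathrm{trans},x}, v_{\mathrm{trans},y}$ reduces to a multiple of $u$ (so that $v''+v=0$ and the $\xi$-contribution disappears via integration by parts), then verifying the polar identity for $\int\mathrm{sign}(u-1)/u^2\,d\theta$ and the three small integration-by-parts computations for $v_{\mathrm{trans},x}$ (and analogues for $v_{\mathrm{trans},y}$). Geometrically the flat-segment reduction is reassuring: it simply reflects that rigid motions and dilations of a line segment produce another line segment.
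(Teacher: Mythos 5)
Your proposal is correct (I checked the key identities: $\mathcal{C}=u^3(u+u'')/(u^2+u'^2)^{3/2}$; $\frac{v}{u^3}\,d\theta=-V\cdot n\,ds$ so $v=-u$ yields the dilation field and $v_{\mathrm{trans},x}$ yields $V\cdot n=n_1$; $\int\mathrm{sign}(u-1)/u^2\,d\theta=|\partial B^{OUT}|-|\partial B^{IN}|-2\pi\lambda_0$; $\int v_{\mathrm{trans},x}/u^3=0$, $\int v_{\mathrm{trans},x}\cos\theta/u^4=-\pi$, $\int v_{\mathrm{trans},x}\sin\theta/u^4=0$; the divergence theorem on $\Omega\setminus B$ and $B\setminus\Omega$ giving $\int_{\partial\Omega^{OUT}}n_1-\int_{\partial\Omega^{IN}}n_1=-M_1$), but it takes a genuinely different route from the paper. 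The paper proves Theorem \ref{ocPrinceton} from scratch with shape derivatives in the ``physical'' variables: it perturbs $\Omega$ by a vector field $V$, computes the induced motion of the barycentric ball $W$ (chosen so that $B_t$ stays the right ball for $\Omega_t$), and derives $d\delta$, $d\lambda_0$ directly; because $W$ is slaved to $V$ through the area/barycenter formulas, the constraint multipliers never appear and the pointwise curvature formula is read off directly from $\int_{\partial\Omega}(\cdot)\,V\cdot n=0$. Your route instead reuses the gauge-function Euler--Lagrange equation (\ref{op1L}) (which the paper derives only for the regularity result and does not use here), multiplies by $-2\pi\lambda_0^2 u^3$ to display the curvature, and then identifies the three Lagrange multipliers a posteriori by testing against the dilation and translation variations of the gauge function. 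The two derivations are equivalent; yours is more economical in that it recycles Proposition \ref{conditions-optimalite-gauge}, and it handles the convexity obstacle $\xi$ more explicitly (via the observation that $v''+v=0$ on flat segments for the three test variations), whereas the paper sidesteps $\xi$ by restricting the variations to strictly convex arcs. One small imprecision: you attribute the vanishing of all cross-terms to (NL2), but for the translation tests the vanishing of $\int v_{\mathrm{trans},x}/u^3$ and $\int v_{\mathrm{trans},x}\sin\theta/u^4$ is a consequence of the specific form of $v_{\mathrm{trans},x}$ (it is an exact derivative $-(\sin\theta/u)'$ up to the $u^3$ factor), not of (NL2); the computations are right, only the attribution is off.
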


\begin{proof}
We are going to perform shape variations on the strictly convex parts of $\partial \Omega$.
The proof is divided into several steps.  Let $V$ be a perturbation, that is a smooth map $V:\R^2\to\R^2$. We denote by $V\cdot n$ the scalar product of $V$ with the outer unit normal vector $n$ to $\partial \Omega$.
\begin{enumerate}
\item
Let $\Omega_t=(I+t V)(\Omega)$. Then
$$
|\Omega_t|=\pi +t \int_{\partial \Omega}  V\cdot n +o(t)\,.
$$
\item
The barycenter constraint implies that
 $\displaystyle \int_{\Omega_t} x\;dxdy= 0 +t\int_{\partial \Omega} x V\cdot n +o(t)$.
Since by definition
$\displaystyle 
x_t=\frac{1}{|\Omega_t|}\int_{\Omega_t} x\; dxdy
$,
by the above formulas one has
$$
x_t=\frac{t}{\pi}\int_{\partial \Omega} x\;V\cdot n  +o(t)\,.
$$
A similar formula holds for $y_t$:
$$
y_t=\frac{t}{\pi}\int_{\partial \Omega} y\;V\cdot n +o(t)\,.
$$
\item
Let $B_t=(I+tW)(B)$, where
$$
\displaystyle W(x,y)=(a,b)+\alpha (x,y)\,,
$$
with
$$
(a,b)=\frac{1}{\pi}\left(\int_{\partial\Omega} x V\cdot n, \int_{\partial\Omega} y V\cdot n \right)
\,, \qquad
\alpha = \frac{1}{2\pi} \int_{\partial\Omega} V\cdot n\,.
$$
\item
The difference between  $|\Omega_t \Delta B_t|$ and $|\Omega \Delta B|$ is given by two terms:
$$\displaystyle 
|\Omega_t \Delta B_t|-|\Omega \Delta B|= \pm \; t \int_{\partial B} W\cdot n \pm t \int_{\partial \Omega} V\cdot n\,:
$$
for the first term of the right hand side  $+$ is on  $\partial B^{OUT}$ and 
$-$ is on $\partial B^{IN}$;
for the last term of the right hand side,  $+$ is $\partial \Omega^{OUT}$ and 
$-$ on $\partial \Omega^{IN}$.

In the next part of the proof we will write
$\displaystyle 
|\Omega_t \Delta B_t|=|\Omega \Delta B|+  t R
$.
\item
We have
$$
\lambda_0(\Omega_t)=\frac{|\Omega_t \Delta B_t|}{|\Omega_t|}=\frac{|\Omega \Delta B| + tR}{|\Omega| + t\int_{\partial \Omega} V\cdot n}= \lambda_0(\Omega)\cdot \frac{1+t\frac{R}{|\Omega \Delta B|}}{1+ \frac{t}{\pi}\int_{\partial \Omega} V\cdot n}= \lambda_0(\Omega) + t\left[\frac{R}{\pi}-\frac{\lambda_0}{\pi} \int_{\partial \Omega} V\cdot n\right]\,.
$$
$$
d\lambda_0(\Omega,V)=\frac{1}{\pi} \left[\pm \int_{\partial B} W\cdot n \pm \int_{\partial \Omega} V\cdot n - \lambda_0 \int_{\partial \Omega} V\cdot n   \right]\,,
$$
that is,
$$
d\lambda_0(\Omega,V)=\frac{1}{\pi} \left[ \int_{\partial B^{OUT}} W\cdot n - \int_{\partial B^{IN}} W\cdot n + \int_{\partial \Omega^{OUT}} V\cdot n - \int_{\partial \Omega^{IN}} V\cdot n - \lambda_0 \int_{\partial \Omega} V\cdot n   \right]\,.
$$
\item
If $r_t$ is the radius of the ball having the same area as  $\Omega_t$, then 
$$
r_t=\sqrt{\frac{\pi + t\int_{\partial \Omega} V\cdot n}{\pi}}=1+t\; \frac{\int_{\partial \Omega} V\cdot n}{2\pi}.
$$ 
This gives
$$
\delta(\Omega_t)=\frac{P(\Omega_t)}{2\pi r_t} -1=\frac{P(\Omega_t)}{2\pi + t \int_{\partial \Omega} V\cdot n} -1=\frac{P(\Omega) + t\int_{\partial \Omega} \mathcal{C}\ V\cdot n}{2\pi + t \int_{\partial \Omega} V\cdot n} -1\,,
$$
where $\mathcal{C}$ indicates the curvature of the boundary.
With the same computations as for $\lambda_0$
$$
\delta(\Omega_t)=\delta(\Omega) + t\frac{\int_{\partial \Omega} \mathcal{C}\ V\cdot n}{2\pi} -t\frac{P(\Omega) \int_{\partial \Omega}V\cdot n}{4\pi^2}  
$$
and so
$$
d\delta(\Omega,V)=\int_{\partial \Omega} \left[\frac{\mathcal{C}}{2\pi}-\frac{(\delta +1)2\pi}{4\pi^2} \right]V\cdot n = 
\int_{\partial \Omega} \frac{\mathcal{C}- \delta -1}{2\pi} V\cdot n\,.
$$
\end{enumerate}

The optimality condition for $\ratioo$:
$$
\displaystyle
\frac{d\delta}{\lambda_0^2}-\frac{2\delta}{\lambda_0^3} d\lambda_0=0,
$$
can be written as
$$
\int_{\partial \Omega} (\mathcal{C}-\delta - 1)V\cdot n = \frac{4\delta}{\lambda_0} \left [\int_{\partial B^{OUT}} W\cdot n - \int_{\partial B^{IN}} W\cdot n +\int_{\partial \Omega^{OUT}} V\cdot n - \int_{\partial \Omega^{IN}} V\cdot n 
- \lambda_0 \int_{\partial \Omega} V\cdot n \right] \,.
$$
Now,
$W\cdot n=a\cos\theta + b\sin\theta +\alpha$ (since $(x,y)\cdot n=1$ on $\partial B$), so
$$
W\cdot n =\cos\theta \int_{\partial\Omega} x V\cdot n +\sin\theta \int_{\partial\Omega} y V\cdot n
+\frac{1}{2\pi} \int_{\partial\Omega} V\cdot n
$$
which gives
$$
\mathcal{C}=\delta+1-4\delta+\frac{4\delta}{2\pi \lambda_0} \left(|\partial B^{OUT}|-|\partial B^{IN}|\right)
\pm \frac{4\delta}{\lambda_0} +\hat{\mu_1} x + \hat{\mu_2} y\,. 
$$
\end{proof}

\begin{rem}\label{ultimo_stadio}
If $S$ denotes the stadium of Theorem \ref{AFNthm}, we can prove that $S$ is the only stadium satisfying the optimality conditions (\ref{opcond}).

To see that, let us consider a stadium centered at $0$, 
given by the union of  a rectangle of dimensions $2r \times 2l$ with $r\le 1$ and two half discs of radius $r$. Let $\theta$ be the angle such that
 $r=\sin \theta$. Assuming without loss of generality that  the area is $\pi$, one has 
$$l=\frac{\pi-\pi \sin^2 \theta}{4\sin \theta}\,.$$ 
The perimeter equals $4l+2\pi r=\frac{\pi}{\sin \theta}+\pi \sin \theta$ which implies
$$
\delta(\theta)=\frac{1}{2 \sin \theta}+\frac{\sin \theta}{2}-1\,,
$$
while we can compute $\lambda_0$ as
$$
\lambda_0(\theta)=\frac{2}{\pi}(\pi-2\theta-\sin(2\theta))\,.
$$
On one hand, an optimal stadium is a critical point of the function
$\theta\mapsto \delta(\theta)/\lambda_0^2(\theta)$. This leads to solve the nonlinear equation
\begin{equation}\label{eqop1}
8\sin\theta(1-\sin\theta)^2-\cos\theta(\pi-2\theta-\sin(2\theta))=0.
\end{equation}
It is a simple exercise to prove that this equation has a unique solution, providing the stadium $S$
which corresponds to the value $\theta\sim 0.5750$.
\\
On the other hand, writing condition (\ref{opcond}) for a stadium yields, with the same notations, to the equation
\begin{equation}\label{eqop2}
4\sin\theta -\frac{3}{2}\sin^2(\theta)-\frac{5}{2}+2\frac{ (1-\sin\theta)^2 (\pi-2\theta)}{\pi-2\theta-\sin(2\theta)}=0.
\end{equation}
It is easy to check that  equation \eqref{eqop2} has a unique solution in $(0,\pi/2)$
and this solves equation \eqref{eqop1} too. Therefore there is only one stadium satisfies (\ref{opcond}); this stadium is $S$, since  $\lambda_0$ and $\lambda$ are equal on any symmetric set. 

Now, to prove that the stadium is indeed the minimizer, it certainly requires to prove first that the Lagrange multilplier $\hat{\mu}_1$ and $\hat{\mu}_2$ are both zero,
then a precise analysis should provide the result.
\end{rem}

\subsection*{Acknowledgments} 
 The first author is supported by the GNAMPA group of Istituto Nazionale di Alta Matematica (INdAM).

A part of this work has been realized while the second and the third authors were hosted at IAS at Princeton. The authors thank the Institute for its warm hospitality.

The authors have been supported by the ANR Projects OPTIFORM and SHAPO of the CNRS (Centre National de la Recherche Scientifique) and by the Fir Project 2013 ``Geometrical and qualitative aspects of PDE's'' of MIUR (Italian Ministry of Education).


\end{document}